\newtheorem{theorem}{Theorem}[section]
\newtheorem{lemma}{Lemma}[section]
\newtheorem{corollary}{Corollary}[section]
\theoremstyle{definition}
\theoremstyle{definition}
\newtheorem{definition}{Definition}[section]
\newcommand{\mc}{\mathcal}
\newcommand{\mf}{\mathfrak}
\newcommand{\R}{\mathbb{R}}
\newcommand{\N}{\mathbb{N}}
\newcommand{\C}{\mathbb{C}}
\renewcommand{\l}{\lambda}
\renewcommand{\O}{\Omega}
 \renewcommand{\o}{\omega}
\DeclareMathOperator{\colim}{colim}
\DeclareMathOperator{\spann}{span}
\numberwithin{equation}{section}
\newcommand{\normord}[1]{:\mathrel{#1}:}
\begin{document}

\title{On the $L^{p}$-spaces of projective limits of probability measures}

\author{Juan Carlos Sampedro} \thanks{The author has been supported by the Research Grant PID2021--123343NB--I00 of the Spanish Ministry of Science, Technology and Universities}
\address{Institute of Interdisciplinary Mathematics (IMI), Madrid, Spain.
	Universidad Polit\'ecnica de Madrid, Departamento de Matem\'atica Aplicada a la Ingenier\'ia Industrial, Ronda de Valencia 3, 28012 Madrid, Spain}
\email{juancarlos.sampedro@upm.es}

\keywords{Projective limit measures, limit, colimit, measures on vector spaces, Gaussian measures, Lebesgue spaces, Osterwalder--Schrader axioms}
\subjclass[2020]{18A30, 60A10, 46E30, 46M10}

\begin{abstract}
	The present article describes the precise structure of the $L^{p}$-spaces of projective limit measures by introducing a category theoretical perspective. This analysis is applied to measures on vector spaces and in particular to Gaussian measures on nuclear topological vector spaces. A simple application to constructive Quantum Field Theory (QFT) is given through the Osterwalder--Schrader axioms.
\end{abstract}

\maketitle
\tableofcontents

\section{Introduction}\label{section-1}

The notion of projective limit of probability spaces is fundamental in various fields of mathematics. For instance, in probability theory, the existence of stochastic processes is usually equivalent to the existence of such a limit through the celebrated Kolmogorov extension theorem \cite{K}. Moreover, the construction of various types of  measures on infinite dimensional vector spaces used in constructive Quantum Field Theory (QFT) is done through this concept, see for instance Yamasaki \cite[Ch. 3]{Y}, Glimm--Jaffe \cite[Ch. 6]{GJ} and Simon \cite{Si}. The existence of projective limits of probability spaces has been extensively studied since the foundational works of Kolmogorov \cite{K} and Bochner \cite{B} in the fifties. The greatest efforts in previous works have been devoted to establishing necessary conditions on the projective system for the existence of the projective limit measure (see, e.g., Choksi \cite{Ch}, Metivier \cite{Me}, Bourbaki \cite{Bou}, Mallory and Sion \cite{Ma}, Rao \cite{RA}, Frolík \cite{F}, Rao and Sazonov \cite{RS} and Pintér \cite{P}, among others). In this article we will change the perspective slightly. Assuming the existence of the projective limit measure, we describe the $L^{p}$-spaces of such measure by introducing a category theoretical perspective. The analogue analysis for the inductive limit measure was partially established by Macheras \cite{M}. However the projective case is more interesting from the point of view of applications where projective limits are more present that inductive ones. 
\par This paper is organized as follows. Section \ref{S2} introduces the basic notions of categorical limits that will make the article self-contained for non-experts in category theory. Section \ref{Se3} presents the main results of the article. Among the main findings, we prove that the $L^{p}$-spaces of the projective limit $(X_{\infty},\mu_{\infty})$ of the projective system $(X_{i},\mu_{i})_{i\in I}$, where $I$ is a given directed set, can be described as an explicit subspace of the product space $\prod_{i\in I}L^{p}(X_{i},\mu_{i})$. This result implies that the integration of functions on $(X_{\infty},\mu_{\infty})$ can be reduced to limits of the type
 \begin{equation*}
	\int_{X_{\infty}}f \, \mathrm{d}\mu_{\infty}=\lim_{i\in I}\int_{X_{i}}f_{i}\, \mathrm{d}\mu_{i},
\end{equation*}
where the limit is understood in the sense of net convergence. The author is referred to Section \ref{Se3} for exact definitions of the notations and concepts involved in these results. Section \ref{Se4} applies the abstract techniques of Section \ref{Se3} to study the measures on vector spaces. This measures are commonly defined through a projective limit measure and consequently the analysis of this article is naturally applied to this case. Section \ref{Se4} is organized into two subsections. Subsection \ref{S4.1} focusses on measures on topological vector spaces. In Subsection \ref{S4.2} we obtain analogous results for Gaussian measures on nuclear spaces. This kind of measures are central in constructive (Euclidean) QFT through the program started by E. Nelson \cite{Ne}, J. Glimm and A. Jaffe \cite{GJ0,GJ} and F. Guerra, L. Rosen and B. Simon \cite{GRS} in the seventies through the Osterwalder--Schrader axioms. As a by-product, in Section \ref{S4.3}, using the techniques developed along Section \ref{Se4}, we give an application to the computation of Schwinger (or correlation) functions through the limit of finite dimensional integrals. In the final Section \ref{Se6}, we give an application of the theory developed along Section \ref{Se3} to obtain certain unitary representations. We set out to answer the following question. Given a family of unitary representations on $L^{2}(X_{i},\mu_{i})$, can we construct an unitary representation on $L^{2}(X_{\infty},\mu_{\infty})$?
\par As a result of the research carried out for this paper, we have obtained a general result to describe the colimit of codiagrams in the category of Banach spaces $\mf{Ban}$. For the convenience of the reader interested in category theory, we include this result in the Appendix \ref{Se5}.

\section{Basic notions regarding categorical limits}\label{S2}

In this section we proceed to describe briefly the basic notions of projective limits (limits) and inductive limits (colimits) in category theoretical language. Recall that a \textit{directed set} is a nonempty set $I$ together with a binary relation $\leq$ which is both  reflexive and transitive (that is, a preorder) and for any two elements $i, j \in I$ there is a $k\in I$ such that $i \leq k$ and $j \leq k$. Any directed set may be regarded as a category. The elements of $I$ are the objects of the category and given $i,j\in I$, there exists a unique morphism $i\to j$ if and only if $i\leq j$.
Let $\mathfrak{C}$ be a fixed category and $I$ a directed set. A \textit{diagram} of shape $I$ is a contravariant functor $\mf{F}:I\to \mathfrak{C}$. It can be represented as $(X_{i},\varphi_{ij})$ for a family of objects $\{X_{i}:i\in I\}$ of $\mf{C}$ indexed by $I$, and for each $i\leq j$ a morphism $\varphi_{ij}:X_{j}\to X_{i}$ such that $\varphi_{ii}$ is the identity on $X_{i}$ and $\varphi_{ik}=\varphi_{ij}\circ \varphi_{jk}$ for each $i\leq j\leq k$. In general, we can consider diagrams as functors $\mf{F}:\mathfrak{J}\to\mathfrak{C}$ indexed by a general category $\mathfrak{J}$. However, diagrams indexed by directed sets are enough for our purposes.
A \textit{cone} of $\mf{F}$ is a pair $(X,\phi_{i})$ where $X$ is an object of $\mf{C}$ and $\phi_{i}:X\to X_{i}$ is a morphism of $\mf{C}$ such that $\phi_{i}=\varphi_{ij}\circ \phi_{j}$ for every $i\leq j$. Let us denote by $\mf{K}(\mf{F})$ the category of cones of $\mf{F}$, that is, the category whose objects are cones $(X,\phi_{i})$ and given two cones $(X,\phi_{i})$, $(Y,\psi_{i})$, the morphisms $(X,\phi_{i})\to (Y,\psi_{i})$ are just morphisms $\varphi: X\to Y$ of $\mf{C}$ such that the diagram of Figure \ref{F0} commutes.

\begin{figure}[h!]
	\[
	\xymatrix@C+1em@R+1em{ 
		& X \ar^{\varphi}[d]  \ar@/_1em/_{\phi_j}[ddl] \ar@/^1em/^{\phi_i}[ddr] & \\
		& Y \ar^{\psi_i}[dr] \ar_{\psi_j}[dl] & \\
		X_j \ar^{\varphi_{ij}}[rr]  & & X_i
	}
	\]
	\caption{Diagram I}
	\label{F0}
\end{figure}
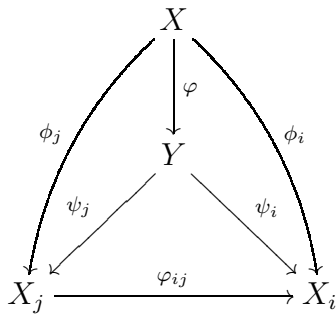

A \textit{limit} of the diagram $(X,\varphi_{ij})$ is defined to be a cone $(Y,\psi_{i})$ characterizing by the following universal property: For every other cone $(X,\phi_{i})$, there exists a unique morphism $\varphi:X\to Y$ making the diagram of Figure \ref{F0} commutative. 
The limit of a given diagram can also be characterized as the terminal object in the category of cones of $\mf{F}$, $\mf{K}(\mf{F})$. The limit of a given diagram does not necessarily exist, however if it does exist, it is unique up to a unique isomorphism in the category of cones. Any given isomorphism class of the limit is called a \textit{realization}. The limit of a given diagram $\mf{F}:I\to \mf{C}$, $\mf{F}\equiv(X_{i},\varphi_{ij})$, is commonly denoted by $\lim \mf{F}= (\lim X_{i},\psi_{i})$.

Dually, a \textit{codiagram} is a covariant functor $\mf{F}:I\to \mathfrak{C}$. It can be represented as $(X_{i},\varphi_{ij})$ for a family of objects $\{X_{i}:i\in I\}$ of $\mf{C}$ indexed by $I$, and for each $i\leq j$ a morphism $\varphi_{ij}:X_{i}\to X_{j}$ such that $\varphi_{ii}$ is the identity on $X_{i}$ and $\varphi_{ik}=\varphi_{jk}\circ \varphi_{ij}$ for each $i\leq j\leq k$. A \textit{cocone} is a pair $(X,\phi_{i})$ where $X$ is an object of $\mathfrak{C}$ and $\phi_{i}:X_{i}\to X$ is a morphism of $\mathfrak{C}$ such that $\phi_{i}=\phi_{j}\circ \varphi_{ij}$ for every $i\leq j$. 
Let us denote by $\mf{K}^{\ast}(\mf{F})$ the category of cocones of $\mf{F}$, that is, the category whose objects are cocones $(X,\phi_{i})$ and given two cocones $(X,\phi_{i})$, $(Y,\psi_{i})$, the morphisms $(Y,\psi_{i})\to(X,\phi_{i})$ are just the morphisms $\varphi: Y\to X$ of $\mf{C}$ such that the diagram of Figure \ref{F20} commutes.

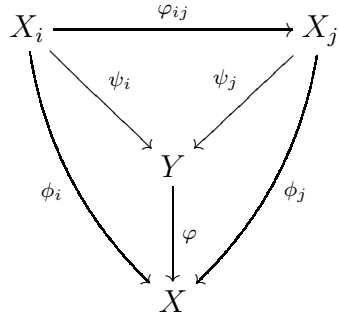
\begin{figure}[h!]
	\[
	\xymatrix@C+1em@R+1em{ 
		X_i \ar^{\varphi_{ij}}[rr] \ar^{\psi_i}[dr] \ar@/_1em/_{\phi_i}[ddr] & & X_j \ar_{\psi_j}[dl] \ar@/^1em/^{\phi_j}[ddl] \\
		&Y \ar^{\varphi}[d] & \\
		& X &
	}
	\]
	\caption{Diagram II}
	\label{F20}
\end{figure}

\noindent A \textit{colimit} of the codiagram $(X_{i},\varphi_{ij})$ is defined to be a cocone $(Y, \psi_{i})$ characterized by the following universal property: For any cocone $(X,\phi_{i})$, there exists a unique morphism $\varphi:Y\to X$ making the diagram of Figure \ref{F20} commutative.

Analogously, the colimit of a given codiagram can also be characterized as the initial object in the category of cocones $\mf{K}^{\ast}(\mf{F})$. The colimit of a given codiagram does not necessarily exist, however if it does exist, it is also unique up to a unique isomorphism in the category of cocones. The colimit of a given codiagram $\mf{F}:I\to \mf{C}$, $\mf{F}\equiv(X_{i},\varphi_{ij})$, is commonly denoted by $\colim \mf{F}= (\colim X_{i},\psi_{i})$. 
\par We refer the reader to Mc Lane \cite[Ch. III]{ML} and Riehl \cite[Ch. 3]{ER} for further details about categorical limits.
\par In this article, we will consider the category $\mathfrak{Ban}$ whose objects are Banach spaces and whose morphisms are linear isometries. For a deep study of this category and its colimits, we refer the reader to Wegge-Olsen \cite[App. L]{WO} and Castillo \cite{JC} and references therein.
\par Throughout this article, we will make use of the following simple lemma.
\begin{lemma}
	\label{L2.1}
	Let $(X_{i},\varphi_{ij})$ be a codiagram on $\mathfrak{Ban}$. Then a cocone $(Y,\psi_{i})$ is a realization of the colimit of $(X_{i},\varphi_{ij})$ if and only if $\bigcup_{i\in I}\psi_{i}(X_{i})$ is dense in $Y$.
\end{lemma}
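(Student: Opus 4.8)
The plan is to prove the two implications separately, exploiting throughout that every morphism of $\mathfrak{Ban}$ is a linear isometry, hence both norm-preserving and injective. I would not construct the colimit from scratch; instead I would characterize it directly through its universal property. Recall that a realization of the colimit satisfies (and is characterized by) that universal property, since it is invariant under isomorphism of cocones.

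For the implication ``density $\Rightarrow$ colimit'', suppose $(Y,\psi_i)$ is a cocone with $D:=\bigcup_{i\in I}\psi_i(X_i)$ dense in $Y$, and verify the universal property directly. First I would observe that $D$ is a linear subspace: given $\psi_i(x)$ and $\psi_j(y)$, directedness of $I$ furnishes $k\geq i,j$, and the cocone relations $\psi_i=\psi_k\circ\varphi_{ik}$, $\psi_j=\psi_k\circ\varphi_{jk}$ rewrite both vectors as images under $\psi_k$, so every linear combination again lies in $\psi_k(X_k)\subseteq D$. Next, given an arbitrary cocone $(Z,\phi_i)$, I would define $\varphi\colon D\to Z$ by $\varphi(\psi_i(x)):=\phi_i(x)$. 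Well-definedness is the crux: if $\psi_i(x)=\psi_j(y)$, choosing $k\geq i,j$ and using that $\psi_k$ is an isometry, hence injective, yields $\varphi_{ik}(x)=\varphi_{jk}(y)$, whence $\phi_i(x)=\phi_k(\varphi_{ik}(x))=\phi_k(\varphi_{jk}(y))=\phi_j(y)$. Linearity follows by the same ``common $k$'' device, and $\varphi$ is isometric on $D$ because $\|\varphi(\psi_i(x))\|=\|\phi_i(x)\|=\|x\|=\|\psi_i(x)\|$, both $\phi_i$ and $\psi_i$ being isometries. Since $D$ is dense in $Y$ and $Z$ is complete, $\varphi$ extends uniquely to a linear isometry $\varphi\colon Y\to Z$, that is, a morphism of $\mathfrak{Ban}$, satisfying $\varphi\circ\psi_i=\phi_i$. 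Uniqueness is immediate, as any morphism with this property agrees with $\varphi$ on the dense set $D$ and both are continuous. This is precisely the universal property, so $(Y,\psi_i)$ is a realization of the colimit.

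For the converse, suppose $(Y,\psi_i)$ is a realization of the colimit and set $Y':=\overline{\bigcup_{i\in I}\psi_i(X_i)}$, a closed subspace of $Y$. Then $(Y',\psi_i)$ is again a cocone, so the universal property of $(Y,\psi_i)$ provides a unique morphism $\varphi\colon Y\to Y'$ with $\varphi\circ\psi_i=\psi_i$. Writing $\iota\colon Y'\hookrightarrow Y$ for the isometric inclusion, the composite $\iota\circ\varphi\colon Y\to Y$ is a morphism satisfying $(\iota\circ\varphi)\circ\psi_i=\psi_i=\mathrm{id}_Y\circ\psi_i$. Applying the uniqueness clause of the universal property to the cocone $(Y,\psi_i)$ regarded as target of itself forces $\iota\circ\varphi=\mathrm{id}_Y$; in particular $\iota$ is surjective, so $Y'=Y$, i.e. $\bigcup_{i\in I}\psi_i(X_i)$ is dense in $Y$.

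The only genuinely delicate point is the first implication, where the well-definedness, linearity, and isometry of the comparison map on $D$ all rest on combining directedness of $I$ with the injectivity of the structure morphisms $\psi_k$; completeness of $Z$ together with density of $D$ then upgrades this densely defined isometry to a morphism on all of $Y$ essentially for free. The converse is purely formal once one tests the universal property against the closed subspace generated by the images.
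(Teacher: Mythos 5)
Your proof is correct and follows essentially the same route as the paper: testing the universal property against the closure $\overline{\bigcup_{i}\psi_i(X_i)}$ for one direction, and defining the comparison morphism on the dense union via $\varphi\circ\psi_i:=\phi_i$ and extending by continuity for the other. You simply supply the details (well-definedness via injectivity of the isometries, the identity trick forcing $\iota\circ\varphi=\mathrm{id}_Y$) that the paper leaves as ``straightforward''.
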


\begin{proof}
	Suppose that $(Y,\psi_{i})$ defines a realization of the colimit of $(X_{i},\varphi_{ij})$. Consider the cocone $(X,\rho_{i})$ where
	\begin{equation*}
	X:=\overline{\bigcup_{i\in I}\psi_{i}(X_{i})}\subset Y,
	\end{equation*}
	and where $\rho_{i}:X_{i}\to X$ are defined by $\rho_{i}(x_{i})=\psi_{i}(x_{i})$ for each $x_{i}\in X_{i}$. Note that $X$ is a Banach space since $\bigcup_{i\in I}\psi_{i}(X_{i})$ is a vector subspace of $Y$. Indeed, let $y_{i}=\psi_{i}(x_{i})$ and $y_{j}=\psi_{j}(x_{j})$ with $i\leq j$. Then, a simple computation shows 
	\begin{align*}
		k_{1}y_{i}+k_{2}y_{j}=\psi_{i}(k_{1} x_{i})+\psi_{j}(k_{2} x_{j}) &= (\psi_{j}\circ \varphi_{ij})(k_{1}x_{i})+\psi_{j}(k_{2} x_{j}) \\
		&=\psi_{j}(\varphi_{ij}(k_{1}x_{i})+k_{2}x_{j})\in \bigcup_{i\in I}\psi_{i}(X_{i}),
	\end{align*}
	for each $k_{1}, k_{2}\in \R$. By the universal property, there exists a unique isometry $\varphi:Y\to X$ making the diagram of Figure \ref{F20} commutative. It is straightforward to prove that the morphism $\varphi$ is in fact an isomorphism. Hence $\bigcup_{i\in I}\psi_{i}(X_{i})$ is dense in $Y$. On the other hand, take a cocone $(Y,\psi_{i})$ such that $\bigcup_{i\in I}\psi_{i}(X_{i})$ dense in $Y$. For any other cocone $(X,\rho_{i})$, define the morphism $\varphi:Y\to X$ by 
	\begin{equation*}
	\varphi\circ\psi_{i}:=\rho_{i}, \quad i\in I.
	\end{equation*}
	It is easy to see that the morphism $\varphi$ is the unique morphism making the diagram of Figure \ref{F20} commutative. Hence $(Y,\psi_{i})$ is a colimit.
\end{proof}

\section{Abstract results}\label{Se3}

In this section we present the main findings of the paper. Let us start by fixing some notation. As usual, we denote a measurable space by a pair $(X,\mc{B})$ where $X$ is the main set and $\mc{B}$ is a $\sigma$-algebra of subsets of $X$.  Let $\mathfrak{M}_{0}$ be the category whose objects are measurable spaces $(X,\mathcal{B})$ and whose morphisms are measurable maps $f:(X,\mathcal{B})\to (Y,\mathcal{S})$. Given a directed set $I$, a diagram $\mf{F}:I\to\mathfrak{M}_{0}$ can be described by a family $(X_{i},\mathcal{B}_{i},\pi_{ij})$ where $(X_{i},\mathcal{B}_{i})$ are measurable spaces and 
$$\pi_{ij}:(X_{j},\mathcal{B}_{j})\longrightarrow (X_{i},\mathcal{B}_{i}), \quad i\leq j,$$ 
are morphisms in $\mathfrak{M}_{0}$ such that $\pi_{ii}$ is the identity and $\pi_{ik}=\pi_{ij}\circ\pi_{jk}$, for each $i\leq j\leq k$. The limit of any diagram $(X_{i},\mathcal{B}_{i},\pi_{ij})$ exists and can be described by the realization $(X_{\infty},\mathcal{B}_{\infty},\pi_{i})$ where 
\begin{equation*}
X_{\infty}:=\Big\{(x_{i})_{i\in I}\in \prod_{i\in I}X_{i} \, : \, \pi_{ij}(x_{j})=x_{i} \text{ for each } i\leq j \Big\},
\end{equation*}
the morphisms $\pi_{j}:X_{\infty}\to X_{j}$ are the projection maps given by $\pi_{j}(x_{i})_{i\in I}:=x_{j}$ and
\begin{equation*}
\mathcal{B}_{\infty}:=\sigma(\{\pi_{i}^{-1}(B) \, : \, B\in\mathcal{B}_{i}, \ i\in I\}).
\end{equation*}
Given a family $\mc{A}$ of subsets of a given set $X$, the notation $\sigma(\mc{A})$ stands for the $\sigma$-algebra of subsets of $X$ generated by $\mc{A}$.
Let us denote by $\mathfrak{M}$ the category whose objects are probability spaces $(X,\mathcal{B},\mu)$ and whose morphisms are measure preserving maps $f:(X,\mathcal{B},\mu)\to (Y,\mathcal{S},\nu)$, i.e., measurable maps that satisfy $f_{\ast}\mu=\nu$, where $f_{\ast}\mu:\mc{S}\to[0,+\infty]$ is the push-forward measure defined by  
$$f_{\ast}\mu(B):=\mu(f^{-1}(B)), \quad B\in \mc{S}.$$
Given a directed set $I$, a diagram $\mf{F}:I\to\mathfrak{M}$ can be described by a family $(X_{i},\mathcal{B}_{i},\mu_{i},\pi_{ij})$ where $(X_{i},\mathcal{B}_{i},\mu_{i})$ are probability spaces and 
$$\pi_{ij}:(X_{j},\mathcal{B}_{j},\mu_{j}) \longrightarrow (X_{i},\mathcal{B}_{i},\mu_{i}), \quad i\leq j,$$
are morphisms in $\mathfrak{M}$ such that $\pi_{ii}$ is the identity and $\pi_{ik}=\pi_{ij}\circ\pi_{jk}$, for each $i\leq j\leq k$. 
\begin{definition}
	A diagram $(X_{i},\mathcal{B}_{i},\mu_{i},\pi_{ij})$ is said to be \textit{convergent} if there exists a measure $\mu_{\infty}$ on the limit $(X_{\infty},\mc{B}_{\infty},\pi_{i})$ of $(X_{i},\mc{B}_{i},\pi_{ij})$ in $\mf{M}_{0}$ such that 
$$(\pi_{i})_{\ast}\mu_{\infty}=\mu_{i} \ \ \text{for each } i\in I.$$ 
\end{definition}
Every convergent diagram $(X_{i},\mathcal{B}_{i},\mu_{i},\pi_{ij})$ has a limit in $\mf{M}$ and it is described by the realization $(X_{\infty},\mathcal{B}_{\infty},\mu_{\infty},\pi_{i})$. The characterization of convergent diagrams on $\mf{M}$ has taken a great deal of work, we refer the reader to the classical result in Yamasaki \cite[Th. 7.2, Part A]{Y} for some sufficient conditions for the convergence. A sharper result has been obtained recently by Pintér \cite{P}. 
\par Subsequently, when we cite a probability space we will omit to write the associated $\sigma$-algebra if this is clear from the context.
For each $1\leq p<\infty$, there is a contravariant functor
\begin{equation*}
L^{p}:\mathfrak{M}\longrightarrow \mathfrak{Ban},
\end{equation*}
that assign to every object $(X,\mu)$ in $\mathfrak{M}$, the object $L^{p}(X,\mu)$ in $\mathfrak{Ban}$ and to every morphism $f:(X,\mu)\to (Y,\nu)$ in $\mathfrak{M}$ the morphism $f^{\ast}:L^{p}(Y,\nu)\to L^{p}(X,\mu)$ in $\mf{Ban}$ defined by $f^{\ast}(u):=u\circ f$ for each $u\in L^{p}(Y,\nu)$. Indeed, the morphism $f^{\ast}:L^{p}(Y,\nu)\to L^{p}(X,\mu)$ is an isometry since the change of variable formula for measures (see for instance Cohn \cite[Pr. 2.6.5]{C}) and the identity $f_{\ast}\mu=\nu$ yield
$$\|f^{\ast}(u)\|_{L^{p}(\mu)}^{p}=\int_{X}|u\circ f|^{p}  \, \mathrm{d}\mu=\int_{Y}|u|^{p}  \, \mathrm{d}f_{\ast}\mu=\int_{Y}|u|^{p} \, \mathrm{d}\nu=\|u\|_{L^{p}(\nu)}^{p}.$$
Our first result states that the $L^{p}$-functor preserves limits of convergent diagrams.

\begin{theorem}
	\label{T3.1}
	Let $(X_{i},\mu_{i},\pi_{ij})$ be a convergent diagram in $\mathfrak{M}$ with limit $(X_{\infty},\mu_{\infty},\pi_{i})$. Then for each $1\leq p<\infty$, the cocone $(L^{p}(X_{\infty},\mu_{\infty}),\pi^{\ast}_{i})$ is the colimit of the codiagram $(L^{p}(X_{i},\mu_{i}),\pi_{ij}^{\ast})$ in $\mf{Ban}$. Symbolically,
	$$\colim \, (L^{p}(X_{i},\mu_{i}),\pi_{ij}^{\ast}) = (L^{p}(X_{\infty},\mu_{\infty}),\pi^{\ast}_{i}).$$
	In other words, the $L^{p}$ functor preserves convergent limits with shape a directed set $I$, i.e., for any convergent diagram $\mf{F}:I\to \mathfrak{M}$, 
	\begin{equation*}
	L^{p}(\lim \mf{F})=\colim L^{p}\mf{F}.
	\end{equation*}
\end{theorem}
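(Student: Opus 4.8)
The plan is to reduce the entire statement to the density criterion for colimits in $\mf{Ban}$ furnished by Lemma \ref{L2.1}. First I would verify that $(L^{p}(X_{\infty},\mu_{\infty}),\pi_{i}^{\ast})$ is genuinely a cocone of the codiagram $(L^{p}(X_{i},\mu_{i}),\pi_{ij}^{\ast})$. Since the diagram is convergent we have $(\pi_{i})_{\ast}\mu_{\infty}=\mu_{i}$, so each $\pi_{i}:(X_{\infty},\mu_{\infty})\to(X_{i},\mu_{i})$ is a morphism in $\mf{M}$ and hence $\pi_{i}^{\ast}:L^{p}(X_{i},\mu_{i})\to L^{p}(X_{\infty},\mu_{\infty})$ is a well-defined isometry. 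The cocone identity $\pi_{i}^{\ast}=\pi_{j}^{\ast}\circ\pi_{ij}^{\ast}$ for $i\leq j$ is then immediate from the relation $\pi_{i}=\pi_{ij}\circ\pi_{j}$ (which holds by the definition of the projective limit $X_{\infty}$) together with the contravariant functoriality of $L^{p}$. This step is routine.

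By Lemma \ref{L2.1}, it then suffices to prove that $\bigcup_{i\in I}\pi_{i}^{\ast}(L^{p}(X_{i},\mu_{i}))$ is dense in $L^{p}(X_{\infty},\mu_{\infty})$. The key structural observation is that the family of cylinder sets $\mc{A}:=\{\pi_{i}^{-1}(B):B\in\mc{B}_{i},\ i\in I\}$ forms an algebra generating $\mc{B}_{\infty}$. Closure under finite Boolean operations is exactly where directedness of $I$ enters: given $\pi_{i}^{-1}(B)$ and $\pi_{j}^{-1}(C)$, pick $k\geq i,j$ and rewrite both as cylinders over $X_{k}$ via $\pi_{i}^{-1}(B)=\pi_{k}^{-1}(\pi_{ik}^{-1}(B))$ and $\pi_{j}^{-1}(C)=\pi_{k}^{-1}(\pi_{jk}^{-1}(C))$; intersections, unions and complements are then computed inside $\mc{B}_{k}$ and pulled back through the single measurable map $\pi_{k}$.

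Next I would record that indicators of cylinder sets lie in the union: for $B\in\mc{B}_{i}$ one has $\mathbbm{1}_{\pi_{i}^{-1}(B)}=\pi_{i}^{\ast}(\mathbbm{1}_{B})\in\pi_{i}^{\ast}(L^{p}(X_{i},\mu_{i}))$, using that $\mu_{i}$ is a probability measure so that $\mathbbm{1}_{B}\in L^{p}(X_{i},\mu_{i})$. More generally, any $\mc{A}$-simple function can, again by directedness, be written as $\pi_{k}^{\ast}(s)$ for a single index $k$ and a simple function $s$ on $X_{k}$; hence the space of $\mc{A}$-simple functions is contained in $\bigcup_{i\in I}\pi_{i}^{\ast}(L^{p}(X_{i},\mu_{i}))$. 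It therefore remains only to show that $\mc{A}$-simple functions are dense in $L^{p}(X_{\infty},\mu_{\infty})$.

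This last density is the analytic core of the argument, though it is classical. Since $\mc{B}_{\infty}$-simple functions are dense in $L^{p}$ for $1\leq p<\infty$, it suffices to approximate each indicator $\mathbbm{1}_{A}$, $A\in\mc{B}_{\infty}$, in $L^{p}$-norm by indicators of cylinder sets. Because $\mc{A}$ is an algebra generating $\mc{B}_{\infty}$ and $\mu_{\infty}$ is finite, the standard approximation theorem (a consequence of the monotone class / Carathéodory argument) yields, for every $\varepsilon>0$, a set $A'\in\mc{A}$ with $\mu_{\infty}(A\triangle A')<\varepsilon$, whence $\|\mathbbm{1}_{A}-\mathbbm{1}_{A'}\|_{L^{p}}^{p}=\mu_{\infty}(A\triangle A')<\varepsilon$. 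Combining this with the previous paragraph and Lemma \ref{L2.1} completes the proof. The only genuine subtlety is the interplay between the directedness of $I$ and the algebra structure of the cylinders; the measure-theoretic approximation itself is entirely standard.
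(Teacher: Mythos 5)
Your proposal is correct and follows essentially the same route as the paper: reduce to the density of $\bigcup_{i\in I}\pi_{i}^{\ast}(L^{p}(X_{i},\mu_{i}))$ via Lemma \ref{L2.1}, observe that the cylinder sets form an algebra generating $\mathcal{B}_{\infty}$, and invoke the standard approximation of $L^{p}$ functions by simple functions over a generating algebra (which the paper outsources to Cohn, Lemma 3.4.6). You are in fact slightly more careful than the paper in making explicit where directedness of $I$ is used to place an arbitrary $\mathcal{A}$-simple function inside a single $\pi_{k}^{\ast}(L^{p}(X_{k},\mu_{k}))$.
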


\begin{proof}
	Let us consider the family of subsets of $X_{\infty}$ given by
	\begin{equation*}
	\mathcal{R}:=\{\pi_{i}^{-1}(B) \, : \, B\in\mathcal{B}_{i}, \ i\in I\}.
	\end{equation*}
	A routine calculation shows that $\mathcal{R}$ is an algebra of subsets. Moreover, by definition, $\mathcal{B}_{\infty}=\sigma(\mathcal{R})$. By Cohn \cite[Le. 3.4.6]{C} it follows that
	\begin{equation*}
	\text{span}\{\mathbbm{1}_{R}:R\in\mathcal{R}\}
	\end{equation*}
	is dense in $L^{p}(X_{\infty},\mu_{\infty})$ where $\mathbbm{1}_{R}:X_{\infty}\to\R$ stands for the characteristic function of the subset $R\in\mc{R}$. Given $R=\pi^{-1}_{i}(B)$ for some $i\in I$ and $B\in\mathcal{B}_{i}$, we can write
	\begin{equation*}
	\mathbbm{1}_{R}=\mathbbm{1}_{B}\circ\pi_{i},
	\end{equation*}
	and from this we obtain the set  inclusion
	\begin{equation}
		\label{dens}
	\text{span}\{\mathbbm{1}_{R}:R\in\mathcal{R}\}\subset\bigcup_{i\in I}\pi^{\ast}_{i}(L^{p}(X_{i},\mu_{i})).
	\end{equation}
	Consequently, the space $\bigcup_{i\in I}\pi^{\ast}_{i}(L^{p}(X_{i},\mu_{i}))$ is dense in $L^{p}(X_{\infty},\mu_{\infty})$ and by Lemma \ref{L2.1} the cocone $(L^{p}(X_{\infty},\mu_{\infty}),\pi^{\ast}_{i})$ is the colimit of $(L^{p}(X_{i},\mu_{i}),\pi_{ij}^{\ast})$.
\end{proof}

We proceed to identify the space $L^{p}(X_{\infty},\mu_{\infty})$ intrinsically in terms of the spaces $L^{p}(X_{i},\mu_{i})$. Let $(X_{i},\mu_{i},\pi_{ij})$ be a convergent diagram in $\mathfrak{M}$ with limit $(X_{\infty},\mu_{\infty},\pi_{i})$. A sequence $(f_{i})_{i\in I}\in \prod_{i\in I}L^{p}(X_{i},\mu_{i})$ is said to be \textit{co-Cauchy} if for every $\varepsilon>0$, there exists $\ell\in I$ such that 
\begin{equation}
\label{coC}
\|\pi_{ij}^{\ast}(f_{i})-f_{j}\|_{L^{p}(\mu_{j})}<\varepsilon, \quad \forall (i,j)\in I\times I,  \quad \ell \leq i\leq j.
\end{equation}
This notion was first introduced in Sampedro \cite{S} for the special case where $\mu_{\infty}$ is the Wiener measure.  Let $\mf{F}:I\to\mf{M}$ be the functor representing the diagram $(X_{i},\mu_{i},\pi_{ij})$. We consider the product space
\begin{equation*}
\mathscr{L}^{p}(\mf{F}):=\Big\{(f_{i})_{i\in I}\in \prod_{i\in I}L^{p}(X_{i},\mu_{i}) \, : \, (f_{i})_{i\in I} \text{ is co-Cauchy}\Big\}\Big\slash \sim,
\end{equation*}
where two sequences $(f_{i})_{i\in I}, (g_{i})_{i\in I}\in \prod_{i\in I}L^{p}(X_{i},\mu_{i})$ are related, $(f_{i})_{i\in I}\sim(g_{i})_{i\in I}$, if by definition,
\begin{equation*}
\lim_{i\in I} \, \|f_{i}-g_{i}\|_{L^{p}(\mu_{i})}=0,
\end{equation*}
where the limit is understood in the sense of net convergence. 
\par Recall that a \textit{net} in $\R$, $\mathfrak{x}=(x_{i})_{i\in I}$, is a function $\mf{x}: I \to \R$, whose domain $I$ is some directed set. We usually write $\mf{x}(i)=x_{i}$ for all $i\in I$. A net $(x_{i})_{i\in I}$ in $\mathbb{R}$ \textit{converges} to $x\in \R$ if for all $\varepsilon>0$ there exists $\ell\in I$ so that whenever $\ell \leq i$ we have $|x_{i}-x|<\varepsilon$. In this case, we write $\lim_{i\in I} x_{i}=x$. If it exists, the limit is unique. The notion of Cauchy net is defined analogously and a net in $\R$ converges if and only if it is Cauchy. 
\par By the elementary properties of the limit, it becomes apparent that $\sim$ is an equivalence relation. We define a norm on $\mathscr{L}^{p}(\mf{F})$ by
\begin{equation*}
\|(f_{i})_{i\in I}\|_{\mathscr{L}}:=\lim_{i\in I} \, \|f_{i}\|_{L^{p}(\mu_{i})}, \quad (f_{i})_{i\in I}\in \mathscr{L}^{p}(\mf{F}).
\end{equation*}
The convergence of the net $(\|f_{i}\|_{L^{p}(\mu_{i})})_{i\in I}$ follows from the co-Cauchy property. Indeed, given $\varepsilon>0$, choose $\ell\in I$ such that \eqref{coC} holds for $\varepsilon/2$ and take $i,r\geq \ell$. Since $I$ is directed, there exists $j\in I$ such that $i,r\leq j$ and consequently 
$$\ell\leq i\leq j, \quad \ell\leq r\leq j.$$
Hence, by \eqref{coC} we infer that
\begin{equation}
		\label{Eqp}
\begin{split}
\left|\|f_{i}\|_{L^{p}(\mu_{i})}-\|f_{r}\|_{L^{p}(\mu_{r})}\right|&=\left|\|\pi^{\ast}_{ij}(f_{i})\|_{L^{p}(\mu_{j})}-\|\pi^{\ast}_{rj}(f_{r})\|_{L^{p}(\mu_{j})}\right|\\
&\leq \| \pi^{\ast}_{ij}(f_{i})-\pi^{\ast}_{rj}(f_{r})\|_{L^{p}(\mu_{j})}\\
&\leq \|\pi_{ij}^{\ast}(f_{i})-f_{j}\|_{L^{p}(\mu_{j})}+\|\pi^{\ast}_{rj}(f_{r})-f_{j}\|_{L^{p}(\mu_{j})}<\varepsilon.
\end{split}
\end{equation}
This shows that the net $(\|f_{i}\|_{L^{p}(\mu_{i})})_{i\in I}$ is Cauchy and hence convergent. The next result identifies the space $\mathscr{L}^{p}(\mf{F})$ with $	L^{p}(X_{\infty},\mu_{\infty})$.
\begin{theorem}
	\label{T3.2}
	Let $(X_{i},\mu_{i},\pi_{ij})$ be a convergent diagram in $\mathfrak{M}$ with limit $(X_{\infty},\mu_{\infty},\pi_{i})$. Then for each $1\leq p<\infty$, the $L^{p}$-space $L^{p}(X_{\infty},\mu_{\infty})$ is isometrically isomorphic to $\mathscr{L}^{p}(\mf{F})$. Symbolically, 
	\begin{equation*}
	L^{p}(X_{\infty},\mu_{\infty})\simeq\mathscr{L}^{p}(\mf{F}).
	\end{equation*}
\end{theorem}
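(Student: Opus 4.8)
The plan is to exhibit an explicit isometric isomorphism $\Phi\colon\mathscr{L}^{p}(\mf{F})\to L^{p}(X_{\infty},\mu_{\infty})$ built from the colimit cocone $\pi_{i}^{\ast}$ produced in Theorem \ref{T3.1}. Given a co-Cauchy representative $(f_{i})_{i\in I}$, I would set
\[
\Phi\big([(f_{i})_{i\in I}]\big):=\lim_{i\in I}\pi_{i}^{\ast}(f_{i}),
\]
the limit being taken in $L^{p}(X_{\infty},\mu_{\infty})$. To see that this limit exists, note that for $i\leq j$ the cocone identity $\pi_{i}^{\ast}=\pi_{j}^{\ast}\circ\pi_{ij}^{\ast}$ together with the fact that $\pi_{j}^{\ast}$ is an isometry gives $\|\pi_{i}^{\ast}(f_{i})-\pi_{j}^{\ast}(f_{j})\|_{L^{p}(\mu_{\infty})}=\|\pi_{ij}^{\ast}(f_{i})-f_{j}\|_{L^{p}(\mu_{j})}$. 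Arguing with a common upper bound exactly as in \eqref{Eqp}, the co-Cauchy condition \eqref{coC} shows that $(\pi_{i}^{\ast}(f_{i}))_{i\in I}$ is a Cauchy net in the Banach space $L^{p}(X_{\infty},\mu_{\infty})$, hence convergent. The same isometry identity shows that a net equivalent to $0$ maps to $0$, so $\Phi$ is well defined on equivalence classes.

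Linearity is immediate. For the isometry property I would use continuity of the norm together with the isometries $\pi_{i}^{\ast}$ to compute $\|\Phi([(f_{i})_{i\in I}])\|_{L^{p}(\mu_{\infty})}=\lim_{i\in I}\|\pi_{i}^{\ast}(f_{i})\|_{L^{p}(\mu_{\infty})}=\lim_{i\in I}\|f_{i}\|_{L^{p}(\mu_{i})}=\|(f_{i})_{i\in I}\|_{\mathscr{L}}$. In particular $\Phi$ is injective, and it only remains to prove surjectivity.

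The main obstacle is surjectivity, and here I would bring in conditional expectations. For $u\in L^{p}(X_{\infty},\mu_{\infty})$ put $\mathcal{F}_{i}:=\pi_{i}^{-1}(\mathcal{B}_{i})$; since $\pi_{i}=\pi_{ij}\circ\pi_{j}$ these form an increasing directed family of sub-$\sigma$-algebras with $\sigma(\bigcup_{i\in I}\mathcal{F}_{i})=\mathcal{B}_{\infty}$. By the factorization lemma together with $(\pi_{i})_{\ast}\mu_{\infty}=\mu_{i}$, the $\mathcal{F}_{i}$-measurable elements of $L^{p}(X_{\infty},\mu_{\infty})$ are precisely $\pi_{i}^{\ast}(L^{p}(X_{i},\mu_{i}))$, with $\pi_{i}^{\ast}$ an isometry onto this subspace. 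Hence the conditional expectation $g_{i}:=\E[u\mid\mathcal{F}_{i}]$, which is an $L^{p}$-contraction fixing $\mathcal{F}_{i}$-measurable functions, can be written uniquely as $g_{i}=\pi_{i}^{\ast}(f_{i})$ with $f_{i}\in L^{p}(X_{i},\mu_{i})$. Given $\varepsilon>0$, the density of $\bigcup_{i\in I}\pi_{i}^{\ast}(L^{p}(X_{i},\mu_{i}))$ from Theorem \ref{T3.1} yields some $i_{0}$ and $v\in L^{p}(X_{i_{0}},\mu_{i_{0}})$ with $\|u-\pi_{i_{0}}^{\ast}(v)\|_{L^{p}(\mu_{\infty})}<\varepsilon$; for $i\geq i_{0}$ the element $\pi_{i_{0}}^{\ast}(v)$ is $\mathcal{F}_{i}$-measurable, so $\|g_{i}-u\|_{L^{p}(\mu_{\infty})}\leq\|\E[u-\pi_{i_{0}}^{\ast}(v)\mid\mathcal{F}_{i}]\|_{L^{p}(\mu_{\infty})}+\|\pi_{i_{0}}^{\ast}(v)-u\|_{L^{p}(\mu_{\infty})}<2\varepsilon$. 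Thus $g_{i}\to u$ along the net, so $(\pi_{i}^{\ast}(f_{i}))_{i\in I}$ is convergent, hence Cauchy, hence $(f_{i})_{i\in I}$ is co-Cauchy by the identity of the first paragraph, and $\Phi([(f_{i})_{i\in I}])=u$.

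This makes $\Phi$ a surjective linear isometry, i.e.\ the desired isometric isomorphism. I note an alternative route: one could instead equip $\mathscr{L}^{p}(\mf{F})$ with cocone maps $\psi_{i}(f_{i}):=[(\pi_{ik}^{\ast}(f_{i}))_{k\in I}]$ (extended arbitrarily for $k\not\geq i$, the class being independent of the choice), verify that $\bigcup_{i\in I}\psi_{i}(L^{p}(X_{i},\mu_{i}))$ is dense, and invoke Lemma \ref{L2.1} together with uniqueness of colimits against Theorem \ref{T3.1}. This route, however, still requires completeness of $\mathscr{L}^{p}(\mf{F})$, which is exactly what surjectivity of $\Phi$ supplies. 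I therefore expect the conditional-expectation argument for surjectivity to be the genuine content of the proof, the remaining verifications being routine.
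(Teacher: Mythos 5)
Your proposal is correct and follows essentially the same route as the paper: the same map $(f_{i})_{i\in I}\mapsto\lim_{i\in I}\pi_{i}^{\ast}(f_{i})$, the same Cauchy-net and isometry computations, and the same surjectivity argument via conditional expectations onto $\pi_{i}^{-1}(\mathcal{B}_{i})$, Doob--Dynkin factorization, and the density plus norm-one-projection estimate. The only cosmetic difference is that the paper packages the conditional-expectation construction as an explicit two-sided inverse $\mf{R}_{p}$ and checks both compositions, whereas you deduce bijectivity from isometry plus surjectivity.
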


\begin{proof}
	For $1\leq p<\infty$, let us consider the operator
	\begin{equation*}
	\mf{I}_{p}:\mathscr{L}^{p}(\mf{F})\longrightarrow L^{p}(X_{\infty},\mu_{\infty}), \quad (f_{i})_{i\in I}\mapsto \lim_{i\in I} \pi_{i}^{\ast}(f_{i}).
	\end{equation*}
	We start by showing that the operator $\mf{I}_{p}$ is well defined. Let $(f_{i})_{i\in I}\in\mathscr{L}^{p}(\mf{F})$. For any given $\varepsilon>0$, taking $\ell\in I$ satisfying \eqref{coC}, we obtain
	\begin{align*}
	\|\pi^{\ast}_{i}(f_{i})-\pi_{j}^{\ast}(f_{j})\|_{L^{p}(\mu_{\infty})}&=\|(\pi^{\ast}_{j}\circ\pi^{\ast}_{ij})(f_{i})-\pi_{j}^{\ast}(f_{j})\|_{L^{p}(\mu_{\infty})}\\
	&=\|\pi_{ij}^{\ast}(f_{i})-f_{j}\|_{L^{p}(\mu_{j})}<\varepsilon, \quad \forall (i,j)\in I\times I,  \quad \ell \leq i\leq j.
	\end{align*}
	Hence $(\pi_{i}^{\ast}(f_{i}))_{i\in I}$ is a Cauchy net in $L^{p}(X_{\infty},\mu_{\infty})$, from which follows its convergence. On the other hand, given two related elements $(f_{i})_{i\in I}\sim (g_{i})_{i\in I}$, we have $\mf{I}_{p}(f_{i})_{i\in I}=\mf{I}_{p}(g_{i})_{i\in I}$ since
	\begin{align*}
	\|\mf{I}_{p}(f_{i})_{i\in I}-\mf{I}_{p}(g_{i})_{i\in I}\|_{L^{p}(\mu_{\infty})}&=\lim_{i\in I} \, \|\pi_{i}^{\ast}(f_{i})-\pi^{\ast}_{i}(g_{i})\|_{L^{p}(\mu_{\infty})}\\
	&=\lim_{i\in I} \, \|f_{i}-g_{i}\|_{L^{p}(\mu_{i})}=0.
	\end{align*}
	This proves that $\mf{I}_{p}$ is well defined. By an analogous computation we obtain that $\mf{I}_{p}$ is an isometry:
	\begin{equation*}
	\|\mf{I}_{p}(f_{i})_{i\in I}\|_{L^{p}(\mu_{\infty})}=\lim_{i\in I} \, \|\pi_{i}^{\ast}(f_{i})\|_{L^{p}(\mu_{\infty})}=\lim_{i\in I} \, \|f_{i}\|_{L^{p}(\mu_{i})}=\|(f_{i})_{i\in I}\|_{\mathscr{L}}.
	\end{equation*}
	Finally, we prove that the inverse of $\mf{I}_{p}$ is given by
	\begin{equation*}
	\mf{R}_{p}: L^{p}(X_{\infty},\mu_{\infty})\longrightarrow \mathscr{L}^{p}(\mf{F}), \quad f\mapsto ([\pi_{i}^{\ast}]^{-1}(\mathbb{E}_{i}(f)))_{i\in I},
    \end{equation*}
    where $\mathbb{E}_{i}(f):=\mathbb{E}[f \, | \, \mc{B}^{i}]$ is the conditional expectation of $f\in L^{p}(X_{\infty},\mu_{\infty})$ conditioned by the $\sigma$-algebra 
    $$\mc{B}^{i}:=\{\pi_{i}^{-1}(B):B\in\mc{B}_{i}\}, \quad i\in I.$$
	Let us show that $\mf{R}_{p}$ is well defined. Let $\mc{B}_{\R}$ be the Borel $\sigma$-algebra of $\R$ with the usual topology. Since the maps $\pi_{i}:(X_{\infty},\mc{B}^{i})\to (X_{i},\mc{B}_{i})$, $\mathbb{E}_{i}(f): (X_{\infty},\mc{B}^{i})\to (\R,\mc{B}_{\R})$ are measurable, by the Doob–Dynkin lemma (see for instance Kallenberg \cite[Le. 1.14]{OlKa}), there exists a measurable function $g_{i}:(X_{i},\mc{B}_{i})\to (\R,\mc{B}_{\R})$ such that $\mathbb{E}_{i}(f)=g_{i}\circ \pi_{i}$. In other words, there exists a measurable function $g_{i}:(X_{i},\mc{B}_{i})\to (\R,\mc{B}_{\R})$ making the diagram of Figure \ref{F990} commutative.

		\begin{figure}[h!]
			\begin{tikzcd}[row sep=large, column sep=large]
				(X_{\infty},\mc{B}^{i}) \arrow{r}{\mathbb{E}_{i}(f)} \arrow{d}{\pi_{i}} & \R \\
			(X_{i},\mc{B}_{i}) \arrow{ur}{g_{i}} & 
			\end{tikzcd}
			\caption{Diagram III}
			\label{F990}
		\end{figure}

	\noindent By the change of variable formula for measures, we obtain
	$$\int_{X_{\infty}}|\mathbb{E}_{i}(f)|^{p}\, \mathrm{d}\mu=\int_{X_{\infty}}|g_{i}\circ \pi_{i}|^{p} \, \mathrm{d}\mu=\int_{X_{i}}|g_{i}|^{p} \, \mathrm{d}(\pi_{i})_{\ast}\mu_{\infty}=\int_{X_{i}}|g_{i}|^{p} \, \mathrm{d}\mu_{i}.$$
	Consequently $g_{i}\in L^{p}(X_{i},\mu_{i})$ and $\mathbb{E}_{i}(f)=\pi_{i}^{\ast}(g_{i})$. This implies that $$[\pi_{i}^{\ast}]^{-1}(\mathbb{E}_{i}(f))=g_{i}\in L^{p}(X_{i},\mu_{i}), \quad i\in I.$$ 
	Let $\varepsilon>0$. Since by \eqref{dens} the subspace $\bigcup_{i\in I}\pi^{\ast}_{i}(L^{p}(X_{i},\mu_{i}))$ is dense in $L^{p}(X_{\infty},\mu_{\infty})$, there exits $\ell\in I$ and $h_{\ell}\in \pi^{\ast}_{\ell}(L^{p}(X_{\ell},\mu_{\ell}))$, such that
	$$\|h_{\ell}-f\|_{L^{p}(\mu_{\infty})}<\frac{\varepsilon}{2}.$$
	Since $\mc{B}^{\ell}\subset \mc{B}^{i}$ for each $\ell\leq i$, if $h_{\ell}$ is $\mc{B}^{\ell}$-measurable, then $h_{\ell}$ is also $\mc{B}^{i}$-measurable.
	By the properties of the conditional expectation, this implies that $\mathbb{E}_{i}(h_{\ell})=h_{\ell}$ for each $\ell\leq i$.
	Consequently, we infer that
	\begin{align*}
		\|\mathbb{E}_{i}(f)-f\|_{L^{p}(\mu_{\infty})}&\leq \|\mathbb{E}_{i}(f)-\mathbb{E}_{i}(h_{\ell})\|_{L^{p}(\mu_{\infty})}+\|h_{\ell}-f\|_{L^{p}(\mu_{\infty})} \\
		&\leq 2 \, \|h_{\ell}-f\|_{L^{p}(\mu_{\infty})}<\varepsilon.
	\end{align*}
    The last inequality follows from the fact that the conditional expectation is a norm one projection. This proves that 
    \begin{equation}
    	\label{Eq.3}
    \lim_{i\in I} \ \mathbb{E}_{i}(f)=f \quad \text{ in } L^{p}(X_{\infty},\mu_{\infty}),
    \end{equation}
    and therefore $([\pi_{i}^{\ast}]^{-1}(\mathbb{E}_{i}(f)))_{i\in I}$ is co-Cauchy. Hence $\mf{R}_{p}$ is well-defined. Finally by \eqref{Eq.3} we deduce on the one hand,
    \begin{align*}
     	(\mf{I}_{p}\circ\mf{R}_{p})(f)=\lim_{i\in I} \, \pi_{i}^{\ast}[\pi_{i}^{\ast}]^{-1}(\mathbb{E}_{i}(f))=\lim_{i\in I}  \, \mathbb{E}_{i}(f)=f,
    \end{align*}
and on the other hand,
    \begin{align*}
    	(\mf{R}_{p}\circ \mf{I}_{p})(f_{i})_{i\in I}&=\left([\pi_{j}^{\ast}]^{-1}\left[\mathbb{E}_{j} \, (\lim_{i\in I} \, \pi_{i}^{\ast}(f_{i}))\right]\right)_{j\in I} =\left([\pi_{j}^{\ast}]^{-1}\left[\lim_{i\in I}\, \mathbb{E}_{j}( \pi_{i}^{\ast}(f_{i}))\right]\right)_{j\in I} \\
    	&=\left([\pi_{j}^{\ast}]^{-1}\pi_{j}^{\ast}(f_{j}))\right)_{j\in I} = (f_{j})_{j\in I}.
    \end{align*}
	This proves that $\mf{I}_{p}^{-1}=\mf{R}_{p}$ and concludes the proof.
\end{proof}
To conclude this story, it would be convenient to give another realization of the colimit of the codiagram $(L^{p}(X_{i},\mu_{i}),\pi_{ij}^{\ast})$ in terms of the space $\mathscr{L}^{p}(\mf{F})$. In Theorem \ref{T3.2} we proved that $L^{p}(X_{\infty},\mu_{\infty})\simeq\mathscr{L}^{p}(\mf{F})$. By Theorem \ref{T3.1}, as $(L^{p}(X_{\infty},\mu_{\infty}),\pi^{\ast}_{i})$ is the colimit of the codiagram $(L^{p}(X_{i},\mu_{i}),\pi_{ij}^{\ast})$, we proceed to define the precise morphisms $(\psi_{i})_{i\in I}$, $\psi_{i}:L^{p}(X_{i},\mu_{i})\to \mathscr{L}^{p}(\mf{F})$, in such a way that $(\mathscr{L}^{p}(\mf{F}),\psi_{i})$ is isomorphic to $(L^{p}(X_{\infty},\mu_{\infty}),\pi^{\ast}_{i})$ in the category of cocones $\mf{K}^{\ast}(L^{p}\mf{F})$. This is enough to prove that $(\mathscr{L}^{p}(\mf{F}),\psi_{i})$ defines another realization of the colimit of $(L^{p}(X_{i},\mu_{i}),\pi_{ij}^{\ast})$. Symbolically,
$$\colim \, (L^{p}(X_{i},\mu_{i}),\pi_{ij}^{\ast}) =  (\mathscr{L}^{p}(\mf{F}),\psi_{i}).$$
The isometries $(\psi_{i})_{i\in I}$, $\psi_{i}:L^{p}(X_{i},\mu_{i})\to \mathscr{L}^{p}(\mf{F})$ are precisely given by
\begin{equation*}
\psi_{i}(f_{i})=(\pi^{\ast}_{ij}(f_{i}))_{j\geq i}, \quad (\pi^{\ast}_{ij}(f_{i}))_{j\geq i}:=\left\{\begin{array}{ll}
\pi^{\ast}_{ij}(f_{i}) & \text{ if } i\leq j \\
0 & \text{ else.}
\end{array}\right.
\end{equation*}
It is easy to see that $(\mathscr{L}^{p}(\mf{F}),\psi_{i})$ defines a cocone in $\mathfrak{Ban}$.
\begin{theorem}
	\label{T3.3}
	Let $(X_{i},\mu_{i},\pi_{ij})$ be a convergent diagram in $\mathfrak{M}$ with limit $(X_{\infty},\mu_{\infty},\pi_{i})$. Then for each $1\leq p<\infty$, the cocone $(\mathscr{L}^{p}(\mf{F}),\psi_{i})$ defines a realization of the colimit of $(L^{p}(X_{i},\mu_{i}),\pi_{ij}^{\ast})$. 
\end{theorem}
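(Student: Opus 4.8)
The plan is to avoid verifying the universal property of $(\mathscr{L}^{p}(\mf{F}),\psi_{i})$ from scratch, and instead to lean on the two theorems already proved. By Theorem \ref{T3.1} the cocone $(L^{p}(X_{\infty},\mu_{\infty}),\pi_{i}^{\ast})$ is \emph{already} a realization of the colimit of $(L^{p}(X_{i},\mu_{i}),\pi_{ij}^{\ast})$, and by Theorem \ref{T3.2} the map $\mf{I}_{p}:\mathscr{L}^{p}(\mf{F})\to L^{p}(X_{\infty},\mu_{\infty})$ is an isometric isomorphism in $\mf{Ban}$. Since colimits are unique up to isomorphism of cocones, it suffices to check that $\mf{I}_{p}$ is a morphism of cocones $(\mathscr{L}^{p}(\mf{F}),\psi_{i})\to(L^{p}(X_{\infty},\mu_{\infty}),\pi_{i}^{\ast})$, i.e.\ that
\begin{equation*}
\mf{I}_{p}\circ\psi_{i}=\pi_{i}^{\ast}, \quad i\in I.
\end{equation*}
Once this single identity is in place, the universal property of $(L^{p}(X_{\infty},\mu_{\infty}),\pi_{i}^{\ast})$ transfers along the isomorphism $\mf{I}_{p}$ to $(\mathscr{L}^{p}(\mf{F}),\psi_{i})$, which is exactly the assertion of the theorem.

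The only genuine computation is this displayed identity. Fixing $i\in I$ and $f_{i}\in L^{p}(X_{i},\mu_{i})$, the definitions of $\mf{I}_{p}$ and of $\psi_{i}$ give
\begin{equation*}
\mf{I}_{p}(\psi_{i}(f_{i}))=\lim_{j\in I}\pi_{j}^{\ast}\big((\psi_{i}(f_{i}))_{j}\big)=\lim_{j\geq i}\pi_{j}^{\ast}(\pi_{ij}^{\ast}(f_{i})),
\end{equation*}
where the second equality uses that the net limit only sees the eventual behaviour, so the components indexed by $j\not\geq i$ (which vanish by the convention in the definition of $\psi_{i}$) are irrelevant. The cone relation $\pi_{i}=\pi_{ij}\circ\pi_{j}$ for $i\leq j$, after applying the contravariant $L^{p}$ functor, becomes $\pi_{i}^{\ast}=\pi_{j}^{\ast}\circ\pi_{ij}^{\ast}$; hence $\pi_{j}^{\ast}(\pi_{ij}^{\ast}(f_{i}))=\pi_{i}^{\ast}(f_{i})$ for every $j\geq i$. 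The net above is therefore eventually constant with value $\pi_{i}^{\ast}(f_{i})$, so its limit equals $\pi_{i}^{\ast}(f_{i})$, as required.

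To finish I would record the elementary categorical transfer explicitly: given any cocone $(Z,\rho_{i})$, the colimit property of $(L^{p}(X_{\infty},\mu_{\infty}),\pi_{i}^{\ast})$ furnishes a unique $\beta:L^{p}(X_{\infty},\mu_{\infty})\to Z$ with $\beta\circ\pi_{i}^{\ast}=\rho_{i}$, and then $\alpha:=\beta\circ\mf{I}_{p}$ is the unique morphism $\mathscr{L}^{p}(\mf{F})\to Z$ satisfying $\alpha\circ\psi_{i}=\rho_{i}$, uniqueness following from the invertibility of $\mf{I}_{p}$. I expect no serious obstacle: the only points demanding care are keeping the direction of the contravariant pullback straight in $\pi_{i}^{\ast}=\pi_{j}^{\ast}\circ\pi_{ij}^{\ast}$, and noting that the ``$0$ else'' convention in $\psi_{i}$ is harmless under net limits. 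As an alternative route one could instead invoke Lemma \ref{L2.1} directly, approximating a co-Cauchy representative $(f_{j})_{j\in I}$ by $\psi_{i}(f_{i})$ for $i$ chosen from the co-Cauchy estimate \eqref{coC}; this shows $\bigcup_{i\in I}\psi_{i}(L^{p}(X_{i},\mu_{i}))$ is dense in $\mathscr{L}^{p}(\mf{F})$ and yields the colimit property without reference to Theorem \ref{T3.1}, but the intertwining argument above is shorter.
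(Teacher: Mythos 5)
Your proposal is correct and follows essentially the same route as the paper: both proofs invoke uniqueness of the colimit in the category of cocones and reduce the claim to showing that the isometric isomorphism $\mf{I}_{p}$ of Theorem \ref{T3.2} intertwines $\psi_{i}$ with $\pi_{i}^{\ast}$, via the same eventually-constant-net computation. The only (harmless) difference is that the paper also verifies $\mf{I}_{p}^{-1}\circ\pi_{i}^{\ast}=\psi_{i}$ by a separate calculation, whereas you correctly observe this follows automatically from the invertibility of $\mf{I}_{p}$.
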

\begin{proof}
	Let $L^{p}\mf{F}:I\to\mf{Ban}$ be the functor associated to the codiagram $(L^{p}(X_{i},\mu_{i}),\pi_{ij}^{\ast})$. Since the colimit is unique up to a isomorphism on the category of cocones $\mf{K}^{\ast}(L^{p}\mf{F})$, it is enough to prove that the cocone $(\mathscr{L}^{p}(\mf{F}),\psi_{i})$ is isomorphic to $(L^{p}(X_{\infty},\mu_{\infty}),\pi^{\ast}_{i})$ in $\mf{K}^{\ast}(L^{p}\mf{F})$. By the proof of Theorem \ref{T3.2}, the map 
	\begin{equation*}
	\mathfrak{I}_{p}:\mathscr{L}^{p}(\mf{F})\longrightarrow L^{p}(X_{\infty},\mu_{\infty}), \quad (f_{i})_{i\in I}\mapsto \lim_{i\in I} \, \pi^{\ast}_{i}(f_{i}),
	\end{equation*}
	is an isometric isomorphism with inverse
		\begin{equation*}
		\mf{I}^{-1}_{p}: L^{p}(X_{\infty},\mu_{\infty})\longrightarrow \mathscr{L}^{p}(\mf{F}), \quad f\mapsto ([\pi_{i}^{\ast}]^{-1}(\mathbb{E}_{i}(f)))_{i\in I}.
	\end{equation*}
	Therefore the morphism $\mf{I}_{p}$ defines an isomorphism in $\mf{Ban}$. To prove that it defines an isomorphism of cocones, we need to verify the commutativity of the diagram of Figure \ref{F99}, as well as the commutativity of the same diagram replacing the arrow of $\mf{I}_{p}$ with the opposite arrow $\mf{I}_{p}^{-1}$.

		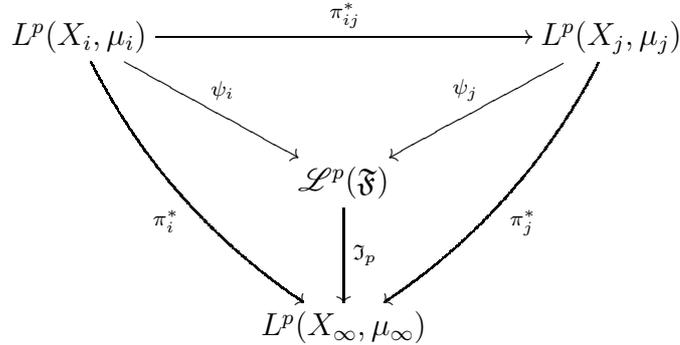
\begin{figure}[h!]
			\[
			\xymatrix@C+1em@R+1em{ 
				L^{p}(X_{i},\mu_{i}) \ar^{\pi^{\ast}_{ij}}[rr] \ar^{\psi_{i}}[dr] \ar@/_1em/_{\pi^{\ast}_{i}}[ddr] & & L^{p}(X_{j},\mu_{j}) \ar_{\psi_{j}}[dl] \ar@/^1em/^{\pi^{\ast}_{j}}[ddl] \\
				&\mathscr{L}^{p}(\mf{F}) \ar^{\mathfrak{I}_{p}}[d] & \\
				&  L^{p}(X_{\infty},\mu_{\infty}) &
			}
			\]
			\caption{Diagram IV}
			\label{F99}
		\end{figure}

	\noindent Take $f_{i}\in L^{p}(X_{i},\mu_{i})$, then by a simple computation we obtain
	\begin{equation*}
	(\mathfrak{I}_{p}\circ \psi_{i})(f_{i})=\lim_{r}h_{r}, \quad h_{r}:=\left\{
	\begin{array}{ll}
	\pi^{\ast}_{i}(f_{i}) & \text{ if } i\leq r \\
	0 & \text{ else, } 
	\end{array}
	\right.
	\end{equation*}
	and thus $(\mathfrak{I}_{p}\circ \psi_{i})(f_{i})=\pi^{\ast}_{i}(f_{i})$. Hence the diagram of Figure \ref{F99} commutes. Conversely, given $f_{i}\in L^{p}(X_{i},\mu_{i})$, we have
	\begin{equation*}
		(\mf{I}_{p}^{-1}\circ\pi_{i}^{\ast})(f_{i})=\left([\pi_{j}^{\ast}]^{-1}\mathbb{E}_{j}( \pi_{i}^{\ast}(f_{i}))\right)_{j\in I}=\left([\pi_{j}^{\ast}]^{-1}[ \pi_{i}^{\ast}(f_{i})]\right)_{j\in I}=(\pi_{ij}^{\ast}(f_{i}))_{j\geq i},
	\end{equation*}
    where the last equality follows from the identity $\pi_{j}^{\ast}\circ\pi_{ij}^{\ast}=\pi^{\ast}_{i}$ which is satisfied for each $i\leq j$.
	Thus $(\mf{I}_{p}^{-1}\circ\pi_{i}^{\ast})(f_{i})=\psi_{i}(f_{i})$ and the proof is completed.
\end{proof}
As a rather direct application of the isometric property of $\mathfrak{I}_{p}$, we obtain a integral limit approximation. Indeed, if $f\in L^{p}(X_{\infty},\mu_{\infty})$, there exists an element $(f_{i})_{i\in I}\in\mathscr{L}^{p}(\mf{F})$ such that $\|f\|_{L^{p}(\mu_{\infty})}=\|(f_{i})_{i}\|_{\mathscr{L}}$. Therefore, we have
\begin{equation*}
\int_{X_{\infty}}|f|^{p}\, \mathrm{d}\mu_{\infty}=\lim_{i\in I}\int_{X_{i}}|f_{i}|^{p}\, \mathrm{d}\mu_{i}.
\end{equation*}
\noindent Moreover, this element can be explicitly given by 
$$f_{i}:=[\pi_{i}^{\ast}]^{-1}(\mathbb{E}_{i}(f))\in L^{p}(X_{i},\mu_{i}), \quad i\in I.$$
Furthermore, if we take into account that given $f\in L^{1}(X_{\infty},\mu_{\infty})$, we can rewrite $f=f^{+}-f^{-}$ with $f^{+}, f^{-}$ positive and $f^{+},f^{-}\in L^{1}(X_{\infty},\mu_{\infty})$,  we infer the existence of $(f_{i})_{i\in I}\in\prod_{i\in I}L^{1}(X_{i},\mu_{i})$ such that 
	\begin{equation*}
	\int_{X_{\infty}}f \, \mathrm{d}\mu_{\infty}=\lim_{i\in I}\int_{X_{i}}f_{i}\, \mathrm{d}\mu_{i}.
	\end{equation*}
Moreover, the sequence $(f_{i})_{i\in I}$ can be chosen to be $f_{i}=[\pi_{i}^{\ast}]^{-1}(\mathbb{E}_{i}(f))$ for each $i\in I$.

\section{Measures on vector spaces}\label{Se4}

In this section we apply the abstract setting of Section \ref{Se3} to measures on vector spaces. Let $E$ be a real vector space. We denote the algebraic dual of $E$ by $E^{a}$, i.e., $E^{a}$ is the space consisted on linear functionals $\ell:E\to \R$. There is a natural embedding of $E$ into $[E^{a}]^{a}$ given by $\xi\mapsto \xi_{\ast}$, where $\xi_{\ast}$ is the linear functional on $E^{a}$ defined by $\ell\mapsto \ell(\xi)$ for each $\ell\in E^{a}$. We introduce the $\sigma$-algebra $\mc{B}_{E}$ on $E^{a}$ through
$$\mc{B}_{E}:=\sigma(\{\xi^{-1}_{\ast}(B) \, : \, \xi\in E, \ B\in\mc{B}_{\R}\}),
$$
where $\mc{B}_{\R}$ is the Borel $\sigma$-algebra of $\R$. We denote by $\mc{R}$ the class of finite dimensional subspaces of $E$. It forms a directed set under the inclusion. 
\begin{definition}[Yamasaki, \cite{Y}, Def. 16.1]
A map $\chi: E \to \C$ is called \textit{characteristic function} if the following conditions are satisfied: 
\begin{enumerate}
\item \textit{Positively defined}: For each integer $n\in \N$, vectors $\xi_{1},\xi_{2},\cdots, \xi_{n}\in E$ and scalars $\alpha_{1},\alpha_{2},\cdots,\alpha_{n}\in\C$, the following positivity condition holds:
$$
\sum_{j,k=1}^{n}\alpha_{j}\bar{\alpha}_{k}\chi(\xi_{j}-\xi_{k})\geq 0.
$$
\item \textit{Continuity}: For each $R\in \mc{R}$, the restricted function $\chi|_{R}:R\to\C$ is continuous, where we are considering the usual Euclidean topology on $R$. 
\end{enumerate}
\end{definition}
Given a measure $\mu$ on $(E^{a},\mc{B}_{E})$, the map $\chi:E\to \C$ defined by
\begin{equation}
	\label{E22}
	\chi(\xi)=\int_{E^{a}}e^{i\ell(\xi)} \, \mathrm{d}\mu(\ell), \quad \xi\in E,
\end{equation}
defines a characteristic function on $E$. If $E$ is finite dimensional, Bochner theorem (see, e.g., Yamasaki \cite[Th. 16.1]{Y})  states the converse, that is, to every characteristic function $\chi$, there corresponds a unique Borel measure $\mu$ on $E^{a}$ satisfying equation \eqref{E22}.
This result can be generalized to infinite dimensions via the Bochner--Minlos theorem \cite[Th. 16.2]{Y}. Indeed, to every characteristic function $\chi$, there corresponds a unique measure $\mu$ on the measurable space $(E^{a},\mc{B}_{E})$ satisfying \eqref{E22}. Let us give an idea of the proof of this result as it will be useful for further considerations. For every finite dimensional subspace $R\in\mc{R}$ of  $E$, the restricted characteristic function $\chi|_{R}:R\to\C$ is a finite dimensional characteristic function. By Bochner theorem, there exists a Borel measure $\mu_{R}:\mc{B}_{R}\to[0,+\infty]$ on $R^{a}$ satisfying \eqref{E22}. Then, the triple $(R^{a},\mu_{R},\pi_{RF})$, where for each $R,F\in\mc{R}$ with $R\leq F$, $\pi_{RF}$ are the measurable functions defined by
$$
\pi_{RF}:(F^{a},\mu_{F})\longrightarrow (R^{a},\mu_{R}), \quad \ell_{F}\mapsto \ell_{F}|_{R}, \quad R\leq F,$$
is a diagram on $\mf{M}$ indexed by the directed set $\mc{R}$. We represent this diagram via the functor $\mf{R}:\mc{R}\to \mf{M}$. According to Yamasaki \cite[Th. 15.1]{Y}, the diagram $\mf{R}$ is convergent with limit $(\mc{R}_{\infty},\mc{B}_{\infty},\mu_{\infty},\pi_{R})$, where
$$
\mc{R}_{\infty}:=\Big\{(\ell_{R})_{R}\in \prod_{R\in\mc{R}} R^{a} \, : \, \pi_{RF}(\ell_{F})=\ell_{R}, \ R\leq F\Big\},
$$
and the maps $\pi_{F}:\mc{R}_{\infty}\to F^{a}$ are defined by $\pi_{F}(\ell_{R})_{R}:=\ell_{F}$ for each $F\in\mc{R}$.
This space is equipped with the $\sigma$-algebra $\mc{B}_{\infty}$ defined through
$$ \mc{B}_{\infty}:=\sigma(\{\pi_{R}^{-1}(B) \, : \, B\in \mc{B}_{R}, \ R\in \mc{R}\}),$$
and $\mu_{\infty}:\mc{B}_{\infty}\to [0,+\infty]$ is a measure satisfying $(\pi_{R})_{\ast}\mu_{\infty}=\mu_{R}$ for each $R\in\mc{R}$. On a further step, it is shown that the map $$I:(E^{a},\mc{B}_{E})\longrightarrow (\mc{R}_{\infty},\mc{B}_{\infty}), \quad \ell\mapsto (\ell|_{R})_{R\in\mc{R}},$$
is an isomorphism in $\mf{M}_{0}$ with inverse
$$I^{-1}:(\mc{R}_{\infty},\mc{B}_{\infty})\longrightarrow(E^{a},\mc{B}_{E}), \quad (\ell_{R})_{R\in\mc{R}}\mapsto \ell,$$
where $\ell\in E^{a}$ is defined by $\ell(\xi):=\ell_{R}(\xi)$ where $R:=\text{span}\{\xi\}$.
Hence, defining the measure 
$$\mu:\mc{B}_{E}\longrightarrow[0,+\infty], \quad \mu(B):=\mu_{\infty}(I(B)),$$ we obtain that $I:(E^{a},\mu)\to (\mc{R}_{\infty},\mu_{\infty})$ is an isomorphism in $\mf{M}$. It can be shown that this measure $\mu$ satisfies \eqref{E22}. We refer the reader to Yamasaki \cite[\S 15--16]{Y} for more details concerning the construction of the measure $\mu$.
\par We introduce the cone $(E^{a},\mu,\varphi_{R})$ of $\mf{R}$, where for each $R\in\mc{R}$, the morphism $\varphi_{R}:E^{a}\to R^{a}$ is given by 
\begin{equation}
	\label{Hom3}
\varphi_{R}:E^{a}\longrightarrow R^{a}, \quad \varphi_{R}(\ell):=\ell|_{R}.
\end{equation}
An easy computation shows that 
$$I:(E^{a},\mu,\varphi_{R})\longrightarrow (\mc{R}_{\infty},\mu_{\infty},\pi_{R}),$$
 is an isomorphism of cones in $\mf{K}(\mf{R})$. Moreover since functors preserve cone isomorphisms, we obtain that the cocones $(L^{p}(E^{a},\mu),\varphi^{\ast}_{R})$ and $(L^{p}(\mc{R}_{\infty},\mu_{\infty}),\pi^{\ast}_{R})$ are isomorphic in the category $\mf{K}^{\ast}(L^{p}\mf{R})$ via the application of the $L^{p}$ functor to $I$:
$$I^{\ast}:(L^{p}(\mc{R}_{\infty},\mu_{\infty}),\pi^{\ast}_{R})\longrightarrow (L^{p}(E^{a},\mu),\varphi^{\ast}_{R}), \quad f\mapsto f\circ I.$$ 
We start by applying the results of Section \ref{Se3} to this particular case.
\begin{theorem}
	Let $E$ be a real vector space, $\chi:E\to \C$ a characteristic function and $\mu$ be the corresponding measure on $(E^{a},\mc{B}_{E})$. Then the cone $(E^{a},\mu,\varphi_{R})$ is a realization of the limit of the convergent diagram $(R^{a},\mu_{R},\pi_{RF})$. Moreover for each $1\leq p<\infty$,
	$$\colim \ (L^{p}(R^{a},\mu_{R}),\pi^{\ast}_{RF})= (L^{p}(E^{a},\mu), \varphi^{\ast}_{R}).$$ 
\end{theorem}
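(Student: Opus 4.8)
The plan is to exploit the categorical scaffolding already assembled above, where the bijection $I$ and its pullback $I^{\ast}$ under the $L^{p}$ functor were identified as isomorphisms of cones and cocones. Once these identifications are in hand, the theorem follows from the single principle that limits and colimits---being terminal and initial objects of the relevant functor categories---are invariant under isomorphism in those categories, so no fresh analytic input is required.

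For the limit assertion, I would first recall that the diagram $(R^{a},\mu_{R},\pi_{RF})$ is convergent, so by Yamasaki's theorem cited above its limit in $\mf{M}$ exists and is realized by $(\mc{R}_{\infty},\mu_{\infty},\pi_{R})$. The preceding paragraph shows that $I:(E^{a},\mu,\varphi_{R})\to(\mc{R}_{\infty},\mu_{\infty},\pi_{R})$ is an isomorphism in the cone category $\mf{K}(\mf{R})$. Since the limit is defined as the terminal object of $\mf{K}(\mf{R})$, and any object isomorphic to a terminal object is itself terminal, I conclude that $(E^{a},\mu,\varphi_{R})$ is also a realization of the limit. This settles the first claim.

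For the colimit assertion, I would invoke Theorem \ref{T3.1} applied to the convergent diagram $(R^{a},\mu_{R},\pi_{RF})$ with limit $(\mc{R}_{\infty},\mu_{\infty},\pi_{R})$; it yields immediately that for each $1\leq p<\infty$ the cocone $(L^{p}(\mc{R}_{\infty},\mu_{\infty}),\pi_{R}^{\ast})$ realizes $\colim(L^{p}(R^{a},\mu_{R}),\pi_{RF}^{\ast})$. Because the discussion above exhibits $I^{\ast}:(L^{p}(\mc{R}_{\infty},\mu_{\infty}),\pi_{R}^{\ast})\to(L^{p}(E^{a},\mu),\varphi_{R}^{\ast})$ as an isomorphism in the cocone category $\mf{K}^{\ast}(L^{p}\mf{R})$, and the colimit is the initial object of that category, the target cocone $(L^{p}(E^{a},\mu),\varphi_{R}^{\ast})$ must also be initial, hence a realization of the colimit. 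This gives the displayed identity.

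The proof carries essentially no analytic difficulty: the genuinely substantial work---convergence of the diagram, the construction of $\mu$ rendering $I$ measure preserving, and the verification that $I$ and $I^{\ast}$ respect the cone and cocone structures---has already been carried out before the statement. The only point demanding care, and what I would flag as the main obstacle, is purely organizational: one must correctly match the abstract Theorem \ref{T3.1} with the concrete finite-dimensional diagram at hand and then transport the resulting colimit along $I^{\ast}$, taking care that it is isomorphism in the cocone category $\mf{K}^{\ast}(L^{p}\mf{R})$---rather than mere isomorphism of the underlying Banach spaces---that licenses the transport.
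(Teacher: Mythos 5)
Your proposal is correct and follows essentially the same route as the paper: both parts rest on the cone isomorphism $I$ (resp.\ the cocone isomorphism $I^{\ast}$) together with uniqueness of (co)limits up to isomorphism in the (co)cone category, and on Theorem \ref{T3.1} for the colimit claim. The only cosmetic difference is that you apply Theorem \ref{T3.1} to the realization $(\mc{R}_{\infty},\mu_{\infty},\pi_{R})$ and then transport along $I^{\ast}$, whereas the paper applies it directly to $(E^{a},\mu,\varphi_{R})$ once that cone is known to realize the limit; the two are interchangeable.
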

\begin{proof}
	By the above construction, the cone $(\mc{R}_{\infty},\mu_{\infty},\pi_{R})$ is the limit of $(R^{a},\mu_{R},\pi_{RF})$. Since the morphism $I: (E^{a},\mu,\varphi_{R})\to (\mc{R}_{\infty},\mu_{\infty},\pi_{R})$ is a cone isomorphism and the limit is unique (up to isomorphisms) in the category of cones $\mf{K}(\mf{R})$, it follows that $(E^{a},\mu,\varphi_{R})$ defines a realization of the limit of $(R^{a},\mu_{R},\pi_{RF})$. On the other hand, Theorem \ref{T3.1} applied to the diagram $(R^{a},\mu_{R},\pi_{RF})$ assures that $(L^{p}(E^{a},\mu),\varphi^{\ast}_{R})$ is the colimit of the codiagram $(L^{p}(R^{a},\mu_{R}),\pi^{\ast}_{RF})$.
\end{proof}
Let us consider for each $1\leq p < \infty$, the normed space $\mathscr{L}^{p}(\mf{R})$. For the convenience of the reader, let us recall the definition of $\mathscr{L}^{p}(\mf{R})$. This space is given by
\begin{equation*}
	\mathscr{L}^{p}(\mf{R}):=\Big\{(f_{R})_{R\in \mc{R}}\in \prod_{R\in \mc{R}}L^{p}(R^{a},\mu_{R}) \, : \, (f_{R})_{R\in \mc{R}} \text{ is co-Cauchy}\Big\}\Big\slash \sim,
\end{equation*}
where we recall that two sequences $(f_{R})_{R\in \mc{R}}, (g_{R})_{R\in \mc{R}}\in \prod_{R\in \mc{R}}L^{p}(R^{a},\mu_{R})$ are related, $(f_{R})_{R\in \mc{R}}\sim(g_{R})_{R\in \mc{R}}$, if by definition,
\begin{equation*}
	\lim_{R\in \mc{R}} \, \|f_{R}-g_{R}\|_{L^{p}(\mu_{R})}=0.
\end{equation*}
The norm on $\mathscr{L}^{p}(\mf{R})$ is given by
\begin{equation*}
	\|(f_{R})_{R\in \mc{R}}\|_{\mathscr{L}}:=\lim_{R\in \mc{R}} \, \|f_{R}\|_{L^{p}(\mu_{R})}, \quad (f_{R})_{R\in \mc{R}}\in \mathscr{L}^{p}(\mf{R}).
\end{equation*}
In order to define a cocone structure on $\mathscr{L}^{p}(\mf{R})$, we introduce the morphisms
$(\psi_{R})_{R\in \mc{R}}$, $\psi_{R}:L^{p}(R^{a},\mu_{R})\to \mathscr{L}^{p}(\mf{R})$, by
\begin{equation*}
	\psi_{R}(f_{R})=(g_{F})_{F\in \mc{R}}, \quad g_{F}=(\pi^{\ast}_{RF}(f_{R}))_{F\geq R}:=\left\{\begin{array}{ll}
		\pi^{\ast}_{RF}(f_{R}) & \text{ if } R\leq F \\
		0 & \text{ else }
	\end{array}\right.
\end{equation*}
Then it is easy to see that $(\mathscr{L}^{p}(\mf{R}),\psi_{R})$ is a cocone in $\mathfrak{Ban}$. The following result identifies $L^{p}(E^{a},\mu)$ with $\mathscr{L}^{p}(\mf{R})$.
\begin{theorem}
	\label{T4.2}
	Let $E$ be an infinite dimensional real vector space,  $\chi:E\to \C$ a characteristic function and $\mu$ be the corresponding measure on $(E^{a},\mc{B}_{E})$. Then the cocone $(\mathscr{L}^{p}(\mf{R}),\psi_{R})$ defines another realization of the colimit of $(L^{p}(R^{a},\mu_{R}),\pi^{\ast}_{RF})$ and the morphism
	\begin{equation}
		\label{E4}
	\mf{L}_{p}:(\mathscr{L}^{p}(\mf{R}),\psi_{R})\longrightarrow (L^{p}(E^{a},\mu),\varphi_{R}^{\ast}), \quad (f_{R})_{R}\mapsto \lim_{R\in\mc{R}} \, \varphi_{R}^{\ast}(f_{R}),
\end{equation}
defines an isomorphism of cocones with inverse
\begin{equation}
	\label{E4.1}
	\mf{L}_{p}^{-1}:(L^{p}(E^{a},\mu),\varphi_{R}^{\ast})\longrightarrow(\mathscr{L}^{p}(\mf{R}),\psi_{R}), \quad f \mapsto ([\varphi_{R}^{\ast}]^{-1}(\mathbb{E}_{R}(f)))_{R\in \mc{R}},
\end{equation}
where $\mathbb{E}_{R}(f):=\mathbb{E}[f \, | \, \mc{B}^{R}]$ is the conditional expectation of $f\in L^{p}(E^{a},\mu)$ conditioned by the $\sigma$-algebra 
$\mc{B}^{R}:=\{\varphi_{R}^{-1}(B):B\in\mc{B}_{R}\}$.
\end{theorem}

\begin{proof}
	By applying Theorem \ref{T3.3} to the diagram $\mf{R}$, we get that  $(\mathscr{L}^{p}(\mf{R}),\psi_{R})$ defines another realization of the colimit of $(L^{p}(R^{a},\mu_{R}),\pi^{\ast}_{RF})$, where the cocone isomorphism is given by
	\begin{equation}
	\label{E}
	\mf{I}_{p}:(\mathscr{L}^{p}(\mf{R}),\psi_{R})\longrightarrow (L^{p}(\mc{R}_{\infty},\mu_{\infty}),\pi^{\ast}_{R}), \quad (f_{R})_{R\in\mc{R}}\mapsto \lim_{R\in\mc{R}} \, \pi^{\ast}_{R}(f_{R}).
	\end{equation}
     This proves the first part of the theorem. On the other hand, since $I^{\ast}$ is a cocone isomorphism, the composition map $\mf{L}_{p}:=I^{\ast}\circ \mf{I}_{p}$ is also a cocone isomorphism. We illustrate the definition of $\mf{L}_{p}$ in the diagram of Figure \ref{F992}.

     		     	\begin{figure}[h!]
     	\begin{tikzcd}[row sep=large, column sep=large]
     		& (L^{p}(E^{a},\mu), \varphi^{\ast}_{R}) \\
     		(\mathscr{L}^{p}(\mf{R}),\psi_{R})  \arrow{r}{\mf{I}_{p}} \arrow{ru}{\mf{L}_{p}} & (L^{p}(\mc{R}_{\infty},\mu_{\infty}),\pi^{\ast}_{R}) \arrow{u}{I^{\ast}}
     	\end{tikzcd}
        		\caption{Diagram V}
     \label{F992}
 \end{figure}

\noindent To conclude the proof, we have to check that the isomorphism $\mf{L}_{p}$ is given by \eqref{E4}. Let $(f_{R})_{R\in\mc{R}}\in \mathscr{L}^{p}(\mf{R})$ and $\ell\in E^{a}$, then
 \begin{align*}
 	\mf{L}_{p}(f_{R})_{R}[\ell]&= (I^{\ast}\circ \mf{I}_{p})(f_{R})_{R}[\ell]=\mf{I}_{p}(f_{R})_{R}[I(\ell)] = \lim_{R\in \mc{R}} \, \pi^{\ast}_{R}(f_{R})[(\ell|_{R})_{R}] \\
 	&=\lim_{R\in \mc{R}} \, f_{R}[\pi_{R}(\ell|_{R})_{R}]= \lim_{R\in \mc{R}} \, f_{R}[\ell|_{R}]=\lim_{R\in\mc{R}} \, (f_{R}\circ \varphi_{R})[\ell]=\lim_{R\in\mc{R}} \, \varphi^{\ast}_{R}(f_{R})[\ell].
 \end{align*}
This concludes the proof.
\end{proof}

\subsection{Topological vector spaces}\label{S4.1} Let $E$ be a real vector space. If we endow $E$ with a topology so that $E$ is a topological vector space, a natural question arises. Let $\mu$ be the measure on $(E^{a},\mc{B}_{E})$ associated with the characteristic function $\chi:E\to\C$. When does $\mu$ lie on the topological dual space $E'$? A partial answer to this question is contained in the following result whose proof can be found in Yamasaki \cite[Th. 20.1]{Y}.

\begin{theorem}[Yamasaki, \cite{Y}, Th. 20.1]
	Let $E$ be a nuclear topological vector space. If $\chi:E\to\C$ is a continuous characteristic function, the corresponding measure $\mu$ lies on $E'$.
\end{theorem}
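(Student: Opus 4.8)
The plan is to show that $\mu$ is concentrated on the topological dual, that is, that the $\mu$-outer measure of $E^{a}\setminus E'$ vanishes. Since every $q$-continuous linear functional is continuous, it suffices to produce, for each $\eta>0$, a continuous Hilbertian seminorm $q$ on $E$ and a radius $r>0$ such that
\begin{equation*}
\mu\big(\{\ell\in E^{a}: \|\ell\|_{-q}>r\}\big)<\eta, \qquad \|\ell\|_{-q}:=\sup\{|\ell(\xi)| : \xi\in E,\ q(\xi)\leq 1\}.
\end{equation*}
Indeed, letting $\eta\to 0$ then shows that $\mu$ is carried by $\bigcup_{q}\{\|\ell\|_{-q}<\infty\}$, and each set $\{\|\ell\|_{-q}<\infty\}$ consists of $q$-continuous, hence continuous, functionals, so it is contained in $E'$.

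First I would exploit the two hypotheses separately. From the continuity of the positive-definite function $\chi$ at the origin, together with the standard estimate $1-\Re\chi(2\xi)\leq 4\,(1-\Re\chi(\xi))$ valid for positive-definite $\chi$, one obtains for every $\varepsilon>0$ a continuous Hilbertian seminorm $p$ on $E$ with
\begin{equation*}
1-\Re\chi(\xi)\leq \varepsilon\big(1+p(\xi)^{2}\big), \qquad \xi\in E.
\end{equation*}
Then I invoke nuclearity: there is a continuous Hilbertian seminorm $q\geq p$ such that the canonical map $i_{qp}:E_{q}\to E_{p}$ between the local Hilbert spaces is Hilbert--Schmidt, so that for any $q$-orthonormal system $\{\xi_{n}\}\subset E$ one has $\sum_{n}p(\xi_{n})^{2}\leq \|i_{qp}\|_{\mathrm{HS}}^{2}<\infty$.

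The key quantitative step is a finite-dimensional Gaussian average, carried out on the members of the projective system $(R^{a},\mu_{R},\pi_{RF})$ of the excerpt. Fix $R\in\mc{R}$ and let $\gamma_{t}$ be the centered Gaussian on $R$ whose covariance is $t$ times the $q$-inner product, so that $\int_{R}e^{i\ell(\xi)}\,\mathrm{d}\gamma_{t}(\xi)=e^{-\frac{t}{2}\|\ell\|_{-q}^{2}}$ for $\ell\in R^{a}$. Using $\chi|_{R}(\xi)=\int_{R^{a}}e^{i\ell(\xi)}\,\mathrm{d}\mu_{R}(\ell)$ and Fubini,
\begin{equation*}
\int_{R}\big(1-\Re\chi(\xi)\big)\,\mathrm{d}\gamma_{t}(\xi)=\int_{R^{a}}\Big(1-e^{-\frac{t}{2}\|\ell\|_{-q}^{2}}\Big)\,\mathrm{d}\mu_{R}(\ell)\geq (1-e^{-1})\,\mu_{R}\big(\{\|\ell\|_{-q}^{2}\geq 2/t\}\big).
\end{equation*}
On the other hand, the continuity estimate above and the elementary computation $\int_{R}p(\xi)^{2}\,\mathrm{d}\gamma_{t}=t\sum_{n}p(\xi_{n})^{2}$ over a $q$-orthonormal basis $\{\xi_{n}\}$ of $R$ bound the left-hand side by $\varepsilon\big(1+t\,\|i_{qp}\|_{\mathrm{HS}}^{2}\big)$, uniformly in $R$. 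Choosing $t=2/r^{2}$ yields the dimension-free tail bound
\begin{equation*}
\mu_{R}\big(\{\ell\in R^{a}:\|\ell\|_{-q}\geq r\}\big)\leq \frac{\varepsilon}{1-e^{-1}}\Big(1+\tfrac{2}{r^{2}}\,\|i_{qp}\|_{\mathrm{HS}}^{2}\Big),
\end{equation*}
and first taking $\varepsilon$ small (which fixes $p$ and $q$) and then $r$ large makes the right-hand side smaller than $\eta$, uniformly over $R\in\mc{R}$.

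It remains to transfer this uniform bound from the finite-dimensional marginals $\mu_{R}$ to $\mu$ itself. Writing $\|\ell\|_{-q}^{2}=\sup_{F}\sum_{\xi_{n}\in F}|\ell(\xi_{n})|^{2}$ as a supremum over finite subsets $F$ of a countable $q$-orthonormal family in $E$, the map $\ell\mapsto\|\ell\|_{-q}$ is $\mc{B}_{E}$-measurable and each truncation is a cylinder function computable through the projection $\varphi_{R}$; since $(E^{a},\mu,\varphi_{R})$ realizes the projective limit and the truncations increase to $\|\ell\|_{-q}^{2}$, continuity from below gives $\mu(\{\|\ell\|_{-q}>r\})\leq \sup_{R}\mu_{R}(\{\|\ell\|_{-q}>r\})\leq\eta$, as required. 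I expect the main obstacle to lie precisely here and in the surrounding measurability bookkeeping: establishing the continuity-plus-positive-definiteness estimate with the correct Hilbertian seminorm, matching that seminorm with the Hilbert--Schmidt factorization supplied by nuclearity so that $\|i_{qp}\|_{\mathrm{HS}}$ genuinely appears, and rigorously exchanging the supremum over finite-dimensional sections with the measure $\mu$, i.e. verifying that $\{\|\ell\|_{-q}\leq r\}$ is full for $\mu$ (and contained in $E'$) rather than merely for each marginal.
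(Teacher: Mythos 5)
The paper does not actually prove this statement: it is imported verbatim from Yamasaki [Th.\ 20.1] and used as a black box, so there is no internal proof to compare against. Your sketch is the classical Minlos argument --- an equicontinuity estimate from continuity plus positive-definiteness, a Hilbert--Schmidt factorization $i_{qp}:E_q\to E_p$ from nuclearity, and a finite-dimensional Gaussian average over the marginals $\mu_R$ --- which is essentially the route of the cited source. The quantitative core is correct: the identity $\int_R(1-\Re\chi)\,\mathrm{d}\gamma_t=\int_{R^a}(1-e^{-\frac{t}{2}\|\ell\|_{-q}^2})\,\mathrm{d}\mu_R$, the bound by $\varepsilon(1+t\|i_{qp}\|_{\mathrm{HS}}^2)$, and the order of quantifiers (fix $\varepsilon$, hence $p$ and $q$, then send $r\to\infty$) all check out.

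The gap is the one you flag in your last sentence, and it is not mere bookkeeping. The measurable function $\ell\mapsto\sup_F\sum_{\xi_n\in F}|\ell(\xi_n)|^2$ is a supremum over a \emph{countable} $q$-orthonormal family, whereas the dual norm $\|\ell\|_{-q}$ is a supremum over the entire $q$-unit ball of $E$; for a merely algebraic functional $\ell\in E^a$ these need not agree, because boundedness of $\ell$ on $\mathrm{span}\{\xi_n\}$ does not propagate to $E$ (one cannot write $\ell(\xi)=\sum_n q(\xi,\xi_n)\,\ell(\xi_n)$ without already knowing that $\ell$ is $q$-continuous). Moreover $E'$ is in general not a member of $\mc{B}_E$, so ``$\mu$ lies on $E'$'' must be read as an outer-measure statement. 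The standard repair is: any $B\in\mc{B}_E$ disjoint from $E'$ is determined by countably many coordinates $\ell\mapsto(\ell(\xi_k))_k$; your tail bound shows that, outside a set of measure at most $\eta$, the restriction $\ell|_{\mathrm{span}\{\xi_k\}}$ is $q$-continuous and hence extends by Hahn--Banach to some $\tilde\ell\in E'$ with the same coordinates, which forces $\mu(B)\le\eta$ for every $\eta>0$ and thus $\mu(B)=0$. Two smaller points in the same vein: for a general (non-metrizable) nuclear space you must justify the existence of the countable $q$-orthonormal family controlling the relevant coordinates (the orthogonal complement of $\ker i_{qp}$ in $E_q$ is separable because $i_{qp}$ is Hilbert--Schmidt, and $p$ vanishes on $\ker i_{qp}$, which is what saves the argument), and $q$ may degenerate on a given $R\in\mc{R}$, so the Gaussian $\gamma_t$ should be placed on the quotient $R/\ker(q|_R)$.
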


Along this section, we will work with a nuclear space $E$ and a given continuous characteristic function $\chi:E\to\C$. Let us denote by $\mc{S}_{E}$ the $\sigma$-algebra defined by
$$\mc{S}_{E}:=\{B\cap E' \, : \, B\in\mc{B}_{E}\} \ (=\mc{B}_{E}\cap E').$$
Then the measure $\mu:\mc{B}_E\to[0,\infty]$ associated with the characteristic function $\chi$ defines a measure on $(E',\mc{S}_{E})$ given by the restriction $\mu:\mc{S}_{E}\to[0,\infty]$. Subsequently, for each $R\in\mc{R}$, we use also the notation $\varphi_{R}$ to denote the restriction to $E'$ of the morphisms defined in \eqref{Hom3}, that is, $\varphi_{R}:E'\to R^{a}$, $\varphi_{R}(\ell)=\ell|_{R}$ for each $\ell\in E'$. Since $\mu(E^{a}\backslash E')=0$, the restriction operator
$$T:(L^{p}(E^{a},\mu),\varphi_{R}^{\ast})\longrightarrow (L^{p}(E',\mu),\varphi^{\ast}_{R}), \quad f\mapsto f|_{E'},$$
defines an isomorphism of cocones $\mf{K}^{\ast}(L^{p}\mf{R})$. Substituting directly the space $L^{p}(E^{a},\mu)$ by $L^{p}(E',\mu)$ in the previous section, we obtain the following result.

\begin{theorem}
	Let $E$ be a nuclear topological vector space, $\chi:E\to \C$ be a continuous characteristic function and $\mu$ be the corresponding measure on $(E',\mc{S}_{E})$. Then, for each $1\leq p<\infty$,
	$$\colim\ (L^{p}(R^{a},\mu_{R}),\pi^{\ast}_{RF})= (L^{p}(E',\mu), \varphi^{\ast}_{R}).$$ 
	Moreover, the cocone $(\mathscr{L}^{p}(\mf{R}),\psi_{R})$ defines another realization of the colimit and the morphism
	\begin{equation}
		\label{E4.3}
		\mf{L}_{p}:(\mathscr{L}^{p}(\mf{R}),\psi_{R})\longrightarrow (L^{p}(E',\mu),\varphi_{R}^{\ast}), \quad (f_{R})_{R}\mapsto \lim_{R\in\mc{R}}\varphi^{\ast}_{R}(f_{R}).
	\end{equation}
is a cocone isomorphism with inverse
\begin{equation}
	\label{E4.10}
	\mf{L}_{p}^{-1}:(L^{p}(E',\mu),\varphi_{R}^{\ast})\longrightarrow(\mathscr{L}^{p}(\mf{R}),\psi_{R}), \quad f \mapsto ([\varphi_{R}^{\ast}]^{-1}(\mathbb{E}_{R}(f)))_{R\in \mc{R}},
\end{equation}
where $\mathbb{E}_{R}(f):=\mathbb{E}[f \, | \, \mc{B}^{R}]$ is the conditional expectation of $f\in L^{p}(E',\mu)$ conditioned by the $\sigma$-algebra 
$\mc{B}^{R}:=\{\varphi_{R}^{-1}(B):B\in\mc{B}_{R}\}$.
\end{theorem}

To finalize this section, we proceed to simplify the product space $\mathscr{L}^{p}(\mf{R})$ into a product of a countable number of factor spaces. This will be useful from the point of view of the applications. To this end, we will restrict to an infinite dimensional Fréchet nuclear space $E$. Under this hypothesis, the space $E$ is separable. Let $(\xi_{n})_{n\in\N}\subset E$ be a complete system of $E$, that is, $\text{span}\{\xi_{n}:n\in\N\}$ is dense in $E$. Consider the associated finite dimensional subspaces
$$H_{n}:=\text{span}\{\xi_{1},\cdots,\xi_{n}\}, \quad n\in\N,$$
with the associated directed subset $\mc{H}\equiv(H_{n})_{n\in\N}\subset \mc{R}$. Obviously, the restricted functor $\mf{R}:\mc{H}\to\mf{M}$, subsequently denoted by $\mf{H}$, is a diagram of $\mf{M}$. We denote $\mf{H}\equiv (H_{n}^{a},\mu_{n},\pi_{nm})$, where $\mu_{n}:=\mu_{H_{n}}$ and $\pi_{nm}:=\pi_{H_{n}H_{m}}$ for each $n<m$. A simple computation shows that $(L^{p}(E',\mu),\varphi^{\ast}_{n})$ is a cocone in $\mf{K}^{\ast}(L^{p}\mf{H})$, where for each $n\in\N$, $\varphi_{n}:=\varphi_{H_{n}}: E'\to H^{a}_{n}$.
The product space corresponding to $\mf{H}$ is $\mathscr{L}^{p}(\mf{H})$.	It is convenient to recall that this space is defined by
\begin{equation*}
	\mathscr{L}^{p}(\mf{H}):=\Big\{(f_{n})_{n\in \N}\in \prod_{n\in \N}L^{p}(H_{n}^{a},\mu_{n}) \, : \, (f_{n})_{n\in \N} \text{ is co-Cauchy}\Big\}\Big\slash \sim,
\end{equation*}
where two sequences $(f_{n})_{n\in \N}, (g_{n})_{n\in \N}\in \prod_{n\in \N}L^{p}(H_{n}^{a},\mu_{n})$ are related, $(f_{n})_{n\in \N}\sim(g_{n})_{n\in \N}$, if by definition
\begin{equation*}
	\lim_{n\to\infty}\|f_{n}-g_{n}\|_{L^{p}(\mu_{n})}=0.
\end{equation*}
The norm on $\mathscr{L}^{p}(\mf{H})$ is defined by
\begin{equation*}
	\|(f_{n})_{n\in \N}\|_{\mathscr{L}}:=\lim_{n\to\infty}\|f_{n}\|_{L^{p}(\mu_{n})}, \quad (f_{n})_{n\in \N}\in \mathscr{L}^{p}(\mf{H}).
\end{equation*}
The final result of this section reads as follows.
\begin{theorem}
	\label{T4.5}
	Let $E$ be an infinite dimensional Fréchet nuclear space, $\chi:E\to \C$ a continuous characteristic function and $\mu$ the associated measure on $E'$. Then, for every complete system $(\xi_{n})_{n\in\N}\subset E$, the cocone $(L^{p}(E',\mu),\varphi_{n}^{\ast})$ is the colimit of $(L^{p}(H^{a}_{n},\mu_{n}), \pi^{\ast}_{nm})$. Moreover, the operator
	\begin{equation}
	\label{EqIso}
	\mf{F}_{p}:\mathscr{L}^{p}(\mf{H})\longrightarrow L^{p}(E',\mu), \quad (f_{n})_{n\in\N}\mapsto \lim_{n\to\infty} \varphi^{\ast}_{n}(f_{n}),
	\end{equation}
	defines an isometric isomorphism with inverse
	\begin{equation}
		\label{EqIso2}
		\mf{F}^{-1}_{p}: L^{p}(E',\mu) \longrightarrow \mathscr{L}^{p}(\mf{H}), \quad f \mapsto  ([\varphi_{n}^{\ast}]^{-1}(\mathbb{E}_{n}(f)))_{n\in \N},
	\end{equation}
where $\mathbb{E}_{n}(f):=\mathbb{E}[f  \, | \, \mathcal{B}^{n}]$ is the conditional expectation of $f\in L^{p}(E',\mu)$ conditioned by the $\sigma$-algebra
$$\mc{B}^{n}:=\{\varphi_{n}^{-1}(B) \, : \, B\in \mc{B}_{n}\},$$
where $\mc{B}_{n}$ is the Borel $\sigma$-algebra of $H^{a}_{n}$.
\end{theorem}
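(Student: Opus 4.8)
The plan is to recognize this statement as the restriction of the general machinery of Theorems \ref{T3.1}--\ref{T3.3} to the countable subdiagram $\mf{H}$ indexed by the chain $\mc{H}=(H_{n})_{n\in\N}$. The one genuinely new point --- and the main obstacle --- is that $\mc{H}$ is only a directed \emph{subset} of $\mc{R}$, so it is not immediate that the cocone $(L^{p}(E',\mu),\varphi^{\ast}_{n})$ still realizes the colimit. By Lemma \ref{L2.1} this reduces to the density of $\bigcup_{n\in\N}\varphi^{\ast}_{n}(L^{p}(H^{a}_{n},\mu_{n}))$ in $L^{p}(E',\mu)$, and the crux is therefore the measure-theoretic claim
\begin{equation*}
\sigma\big(\{\varphi_{n}^{-1}(B): B\in\mc{B}_{H_{n}}, \ n\in\N\}\big)=\mc{S}_{E}.
\end{equation*}

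First I would prove this identity. The inclusion $\subseteq$ is immediate since each $\varphi_{n}$ is $\mc{S}_{E}$--$\mc{B}_{H_{n}}$ measurable. For the reverse inclusion, recall that $\mc{S}_{E}$ is generated by the trace maps $\ell\mapsto\ell(\xi)$, $\xi\in E$, so it suffices to show each such map is measurable for the left-hand $\sigma$-algebra. Here is where the hypotheses enter decisively: since $E$ is Fréchet nuclear it is separable, which is what guarantees the existence of the complete system $(\xi_{n})_{n\in\N}$; and since $\mu$ is carried by $E'$ (by the nuclear-space theorem quoted above), every point $\ell$ of the underlying space is a \emph{continuous} functional. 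Fixing $\xi\in E$ and choosing $\eta_{k}\in\text{span}\{\xi_{n}:n\in\N\}$ with $\eta_{k}\to\xi$ in $E$, continuity of $\ell$ gives $\ell(\xi)=\lim_{k}\ell(\eta_{k})$ for every $\ell\in E'$; each $\ell\mapsto\ell(\eta_{k})$ is a finite linear combination of the coordinate maps $\ell\mapsto\ell(\xi_{j})$ (which factor through the relevant $\varphi_{n}$ and so are measurable for the left-hand $\sigma$-algebra), and a pointwise limit of measurable functions is measurable. This proves $\ell\mapsto\ell(\xi)$ is measurable, hence the reverse inclusion. I expect this step to be the main difficulty, precisely because it is the place where separability, density of the complete system, and the concentration of $\mu$ on the continuous dual $E'$ all have to be used together.

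Granting the $\sigma$-algebra identity, the remainder follows the proofs of Theorems \ref{T3.1}--\ref{T3.3} verbatim, with $\N$ (via $\mc{H}$) in place of the index $I$. For the colimit statement I would note that $\mc{R}_{\mc{H}}:=\{\varphi_{n}^{-1}(B)\}$ is an algebra of subsets --- the chain $H_{1}\subset H_{2}\subset\cdots$ lets one lift any two cylinders to a common level $H_{m}$ via $\varphi_{n}=\pi_{nm}\circ\varphi_{m}$ --- that it generates $\mc{S}_{E}$ by the previous step, and hence that $\text{span}\{\mathbbm{1}_{R}:R\in\mc{R}_{\mc{H}}\}$ is dense in $L^{p}(E',\mu)$ by Cohn's lemma exactly as in \eqref{dens}. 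Since this span sits inside $\bigcup_{n}\varphi^{\ast}_{n}(L^{p}(H^{a}_{n},\mu_{n}))$, Lemma \ref{L2.1} yields that $(L^{p}(E',\mu),\varphi^{\ast}_{n})$ is the colimit.

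Finally, for the isometric isomorphism I would transcribe the argument of Theorem \ref{T3.2}. Well-definedness and the isometry property of $\mf{F}_{p}$ in \eqref{EqIso} follow from the co-Cauchy condition \eqref{coC} exactly as before. For the inverse \eqref{EqIso2}, the Doob--Dynkin lemma writes $\mathbb{E}_{n}(f)=g_{n}\circ\varphi_{n}$ with $g_{n}=[\varphi^{\ast}_{n}]^{-1}(\mathbb{E}_{n}(f))\in L^{p}(H^{a}_{n},\mu_{n})$, and the martingale-type convergence $\lim_{n}\mathbb{E}_{n}(f)=f$ in $L^{p}(E',\mu)$ --- the only place the density from the previous paragraph is needed --- shows that $([\varphi^{\ast}_{n}]^{-1}(\mathbb{E}_{n}(f)))_{n}$ is co-Cauchy and that $\mf{F}_{p}$ and $\mf{F}^{-1}_{p}$ are mutually inverse. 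This completes the plan.
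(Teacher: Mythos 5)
Your proposal is correct and follows essentially the same route as the paper: you isolate the $\sigma$-algebra identity $\sigma(\{\varphi_{n}^{-1}(B)\})=\mc{S}_{E}$ as the crux, prove the nontrivial inclusion by approximating an arbitrary $\xi\in E$ by finite linear combinations of the complete system and using that each $\ell\in E'$ is continuous so that $\xi_{\ast}$ is a pointwise limit of measurable functionals, and then invoke the density/Lemma \ref{L2.1} argument of Theorem \ref{T3.1} and the transcription of Theorem \ref{T3.2} for the isometric isomorphism. This matches the paper's proof step for step, so nothing further is needed.
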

\begin{proof}
Let us start by proving the identity of $\sigma$-algebras
\begin{equation}
\label{E4.6}
\sigma(\{\varphi_{n}^{-1}(B) \, : \, n\in \N, \ B\in\mc{B}_{n} \})=\mc{S}_{E},
\end{equation}
where $\mc{B}_{n}$ is the Borel $\sigma$-algebra of $H_{n}^{a}$ and $\varphi_{n}:=\varphi_{H_{n}}:E'\to H^{a}_{n}$ for each $n\in\N$. Note that
$$\mc{S}_{E}=\sigma(\{\xi_{\ast}^{-1}(B) \, : \, \xi\in E, \ B\in \mc{B}_{\R}\}),$$
and also,
$$\sigma(\{\varphi_{n}^{-1}(B) \, : \, n\in \N, \ B\in\mc{B}_{n} \})=\sigma(\{(\xi_{n})_{\ast}^{-1}(B) \, : \, B\in \mc{B}_{\R}, \ n\in\N\}),$$
where for each $\xi\in E$, we are denoting by $\xi_{\ast}$ the functional 
$$\xi_{\ast}:E'\to\R, \quad  \xi_{\ast}(f):=f(\xi).$$ 
Hence, the proof of \eqref{E4.6} reduces to show the following equality of $\sigma$-algebras
\begin{equation*}
	\label{E4.7}
	\mc{S}_{1}:=\sigma(\{(\xi_{n})_{\ast}^{-1}(B) \, : \, B\in \mc{B}_{\R}, \ n\in\N\})=\sigma(\{\xi_{\ast}^{-1}(B) \, : \, \xi\in E, \ B\in \mc{B}_{\R}\})=:\mc{S}_{2}.
\end{equation*}
The inclusion $\mc{S}_{1}\subset\mc{S}_{2}$ is clear. In order to prove $\mc{S}_{2}\subset \mc{S}_{1}$, we have to show that for every $\xi\in E$, the functional $\xi_{\ast}:E^{'}\to\R$ is $\mc{S}_{1}$-measurable. Since $\text{span}\{\xi_{n}:n\in\N\}$ is dense in $E$, there exists a sequence of finite linear combinations $(w_{n})_{n\in\N}\subset E$, 
$$w_{n}:=\sum_{i=1}^{N(n)}\alpha_{i}(n) \xi_{n}, \quad n\in \N,$$
such that $w_{n}\to \xi$ as $n\to\infty$ in the metric topology of $E$ (recall that $E$ is a Fréchet space). Since $(\xi_{n})_{\ast}$ are $\mc{S}_{1}$-measurable and
$$(w_{n})_{\ast}=\sum_{i=1}^{N(n)}\alpha_{i}(n)(\xi_{n})_{\ast}, \quad n\in\N,$$
it follows that $(w_{n})_{\ast}$ are also $\mc{S}_{1}$-measurable. Moreover, $(w_{n})_{\ast}\to \xi_{\ast}$ pointwise as $n\to\infty$ since for each $f\in E'$,
$$\lim_{n\to\infty}(w_{n})_{\ast}(f)=\lim_{n\to\infty}f(w_{n})=f(\xi)=\xi_{\ast}(f).$$
Hence, $\xi_{\ast}$ is $\mc{S}_{1}$-measurable as is the pointwise limit of $\mc{S}_{1}$-measurable functions. This shows that $\mc{S}_{1}=\mc{S}_{2}$ and completes the proof of \eqref{E4.6}. Arguing as in the proof of Theorem \ref{T3.1}, it follows that the space 
$$\bigcup_{n\in\N}\varphi^{\ast}_{n}(L^{p}(H^{a}_{n},\mu_{n}))\subset L^{p}(E',\mu) ,$$
is dense in $L^{p}(E',\mu)$. By Lemma \ref{L2.1}, this implies that $(L^{p}(E',\mu),\varphi_{n}^{\ast})$ is the colimit of $(L^{p}(H^{a}_{n},\mu_{n}), \pi^{\ast}_{nm})$. The proof of the identification $L^{p}(E',\mu)\simeq \mathscr{L}^{p}(\mf{H})$ via the isomorphism \eqref{EqIso}--\eqref{EqIso2}, follows the same steeps of the proof of Theorem \ref{T3.2}.
\end{proof}

\subsection{Gaussian measures}\label{S4.2} In this subsection we focus on the particular case of Gaussian measures. Let $E$ be an infinite dimensional Fréchet nuclear space. A measure $\mu$ on $(E',\mc{S}_{E})$ is called \textit{Gaussian} if there exists an inner product $\mf{C}(\cdot,\cdot)$ on $E$ such that 
\begin{equation}
	\label{chi}
\chi(\xi)=e^{-\frac{1}{2}\mf{C}(\xi,\xi)}, \quad \xi\in E,
\end{equation}
where $\chi: E\to\C$ is the characteristic function of $\mu$. The inner product $\mf{C}$ is usually called the covariance operator. Let $R$ be a finite dimensional subspace of $E$ and let $\mf{C}_{R}$ denote the restriction of $\mf{C}$ to $R$, i.e., $\mf{C}_{R}:R\times R\to \R$. Then, if $\{\xi_{1},\xi_{2},\cdots,\xi_{n}\}$ is a basis of $R$, $\mf{C}_{R}$ can be represented as a symmetric matrix by
$$\mf{C}_{R}\equiv \left(\mf{C}_{ij}\right)_{1\leq i,j \leq n}, \quad \mf{C}_{ij}=\mf{C}(\xi_{i},\xi_{j}).$$
We denote its inverse by $\mf{C}^{-1}_{R}\equiv (\mf{C}^{-1}_{ij})_{1\leq i,j \leq n}$.
By Bochner theorem and \eqref{chi}, it is easy to see that the measure $\mu_{R}$ is explicitly given by 
$$\mathrm{d}\mu_{R}(\mathbf{x})=\frac{1}{\sqrt{(2\pi)^{n}\det[\mf{C}_{R}]}}\exp\left\{-\frac{1}{2}\sum_{i,j=1}^{n}\mf{C}^{-1}_{ij}x_{i}x_{j}\right\} \, \mathrm{d}^{n}\mathbf{x},$$
$$\mathbf{x}:=(x_{1},\cdots,x_{n})\in R^{a}\simeq \R^{n},$$
where ${\rm{d}}^{n}\mathbf{x}$ is the Lebesgue measure on $R$. Keeping the notation of Section \ref{S4.1}, we obtain the following result as a direct application of Theorem \ref{T4.5} to this particular case. It establishes a small extension of known results for the $L^{p}$-case with $p\neq 2$.
\begin{theorem}
	\label{T4.5.2}
	Let $E$ be an infinite dimensional Fréchet nuclear space and $\mu$ a Gaussian measure on $E'$ with covariance operator $\mf{C}$. Then for every complete system $(\xi_{n})_{n\in\N}\subset E$, the operator
	$$\mf{F}_{p}:\mathscr{L}^{p}(\mf{H})\longrightarrow L^{p}(E',\mu), \quad (f_{n})_{n\in\N}\mapsto \lim_{n\to\infty} \varphi^{\ast}_{n}(f_{n}),$$
	defines an isometric isomorphism with inverse
	\begin{equation}
		\label{eq3.3.3}
		\mf{F}^{-1}_{p}: L^{p}(E',\mu) \longrightarrow \mathscr{L}^{p}(\mf{H}), \quad f \mapsto  ([\varphi_{n}^{\ast}]^{-1}(\mathbb{E}_{n}(f)))_{n\in \N}.
	\end{equation}
Consequently, for each $f\in L^{1}(E',\mu)$, there exists a sequence $(f_{n})_{n\in\N}\in \prod_{n\in \N}L^{1}(H_{n}^{a},\mu_{n})$ such that
\begin{equation}
	\int_{E'}f \, \mathrm{d}\mu=\lim_{n\to\infty} \int_{H_{n}^{a}} f_{n}(\mathbf{x})\cdot  \frac{\exp\left\{-\frac{1}{2}\sum_{i,j=1}^{n}\mf{C}^{-1}_{ij}x_{i}x_{j}\right\}}{\sqrt{(2\pi)^{n}\det[\mf{C}_{R}]}} \, \mathrm{d}^{n}\mathbf{x}.
\end{equation}
Moreover, the sequence $(f_{n})_{n\in\N}$ can be chosen to be $f_{n}=[\varphi_{n}^{\ast}]^{-1}(\mathbb{E}_{n}(f))$ for each $n\in \N$. 
\end{theorem}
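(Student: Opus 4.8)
The plan is to deduce all three assertions from Theorem \ref{T4.5} together with the explicit Bochner description of the finite-dimensional marginals $\mu_n$. First I would verify that the Gaussian hypothesis places us within the scope of Theorem \ref{T4.5}. By definition \eqref{chi}, the characteristic function of a Gaussian measure is $\chi(\xi)=e^{-\frac{1}{2}\mf{C}(\xi,\xi)}$; since $\mf{C}$ is an inner product, $\chi$ is positively defined, and being the exponential of a continuous quadratic form it is continuous on $E$. Hence $\chi$ is a continuous characteristic function, and for the infinite dimensional Fréchet nuclear space $E$ with any complete system $(\xi_n)_{n\in\N}$, Theorem \ref{T4.5} applies verbatim and immediately gives that $\mf{F}_p$ is an isometric isomorphism with inverse \eqref{eq3.3.3}. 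This disposes of the first assertion with no further computation.

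For the integral limit I would take $f\in L^1(E',\mu)$ and repeat the integral-approximation argument carried out in the discussion following Theorem \ref{T3.3} and summarized in Theorem \ref{C4.4}, now using the isometric isomorphism $\mf{F}_p$ of Theorem \ref{T4.5} in place of $\mf{I}_p$. Decomposing $f=f^+-f^-$ and applying the isometric property to each positive part produces a sequence $(f_n)_{n\in\N}\in\prod_{n\in\N}L^1(H_n^a,\mu_n)$, explicitly $f_n=[\varphi_n^\ast]^{-1}(\mathbb{E}_n(f))$ as dictated by \eqref{eq3.3.3}, satisfying
\begin{equation*}
\int_{E'}f\,\mathrm{d}\mu=\lim_{n\to\infty}\int_{H_n^a}f_n\,\mathrm{d}\mu_n.
\end{equation*}

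Finally I would make the right-hand integrals explicit. For each $n$ the marginal $\mu_n=\mu_{H_n}$ is, by Bochner's theorem applied to the restricted quadratic characteristic function $\chi|_{H_n}$, exactly the centered Gaussian measure on $H_n^a\simeq\R^n$ with the density
\begin{equation*}
\mathrm{d}\mu_n(\mathbf{x})=\frac{1}{\sqrt{(2\pi)^n\det[\mf{C}_R]}}\exp\left\{-\frac{1}{2}\sum_{i,j=1}^n\mf{C}^{-1}_{ij}x_ix_j\right\}\mathrm{d}^n\mathbf{x}
\end{equation*}
recorded just before the statement; substituting this into $\int_{H_n^a}f_n\,\mathrm{d}\mu_n$ yields the displayed formula of the theorem.

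Since the result is a specialization, I do not expect a genuine obstacle. The only point requiring care is that the $L^1$ integral approximation, rather than merely the $L^p$-norm identity, must be transferred from the full diagram $\mf{R}$ to the countable restricted diagram $\mf{H}$. This is legitimate precisely because of the $\sigma$-algebra identity \eqref{E4.6} established in the proof of Theorem \ref{T4.5}, which guarantees that the cylinder sets over $\mf{H}$ generate all of $\mc{S}_E$, so that the conditional expectations $\mathbb{E}_n(f)$ still converge to $f$ in $L^1(E',\mu)$.
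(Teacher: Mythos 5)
Your proposal is correct and follows essentially the same route as the paper, which offers no separate proof and presents the theorem as a direct application of Theorem \ref{T4.5} combined with the explicit Gaussian density of the marginals $\mu_n$ and the integral-limit consequence in the style of Theorem \ref{C4.4}. Your extra remark that the $\sigma$-algebra identity \eqref{E4.6} is what legitimizes transferring the conditional-expectation convergence to the countable diagram $\mf{H}$ is a correct (and slightly more careful) articulation of what the paper leaves implicit.
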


\section{Application I: Constructive Quantum Field Theory}\label{S4.3}

In this section, we are mainly interested in the application of the preceding abstract results to constructive (Euclidean) Quantum Field Theory (QFT) via the Osterwalder--Schrader axioms as stated in Glimm--Jaffe \cite[Ch. 6]{GJ}. Roughly speaking, the Osterwalder--Schrader axioms are stated in terms of a Borel probability measure $\mu$ on the space of tempered distributions $\mathscr{S}'(\R^{d})$, where $d$ is the space-time dimension, satisfying a series of axioms \cite[pp. 91–92]{GJ}. The measure $\mu$ plays the same role as does the Feynman--Kac measure in quantum mechanics. One of the major concerns of QFT is the computation of the Schwinger (or correlation) functions that are given by 
$$\mf{S}_{k}:\prod_{i=1}^{k}\mathscr{S}(\R^{d})\to \R, \quad \mf{S}_{k}(f_{1},\cdots,f_{k}):=\int_{\mathscr{S}'(\R^{d})}\phi(f_{1})\cdots\phi(f_{k}) \, \mathrm{d}\mu(\phi),$$
for positive integers $k\geq 1$.
We proceed to describe the measure $\mu$ in the case of the scalar free field with mass $m\in(0,\infty)$ and spin zero. It is well known that the space of Schwartz functions $E=\mathscr{S}(\R^{d})$ is an infinite dimensional Fréchet nuclear space. Given $m\in(0,\infty)$, let $\mu_{\mf{C}}$ be the Gaussian measure on $\mathscr{S}'(\R^{d})$ with covariance operator
$$\mf{C}:\mathscr{S}(\R^{d})\times \mathscr{S}(\R^{d})\longrightarrow \R, \quad  \mf{C}(f,g):=\int_{\R^{d}}f(x)(-\Delta+m^{2})^{-1}g(x) \, \mathrm{d}x.$$
The measure space $(\mathscr{S}'(\R^{d}),\mu_{\mf{C}})$ is known as the path space for free particles with mass $m>0$. For the sake of clarity, let us fix $m=1$ throughout the rest of this section. Under this assumption, the completion of the pre-Hilbert space $(\mathscr{S}(\R^{d}), \mf{C})$ leads to the Sobolev space $H^{-1}(\R^{d})$. The computation of the Schwinger functions in this case is easy as a consequence of the Bochner theorem. Indeed, we have
\begin{align*}
\mf{S}_{k}(f_{1},\cdots,f_{k})&=\int_{\mathscr{S}'(\R^{d})}\phi(f_{1})\cdots\phi(f_{k}) \, \mathrm{d}\mu_{\mf{C}}(\phi)\\
&=\int_{R^{a}}x_{1}\cdots x_{k}\frac{\exp\left\{-\frac{1}{2}\sum_{i,j=1}^{k}\mf{C}^{-1}_{ij}x_{i}x_{j}\right\}}{\sqrt{(2\pi)^{k}\det[\mf{C}_{R}]}} \, \mathrm{d}^{k}\mathbf{x},
\end{align*}
where $R:=\text{span}\{f_{1},\cdots,f_{k}\}\subset \mathscr{S}(\R^{d})$ and therefore
$$\mf{C}_{R}\equiv \left(\mf{C}_{ij}\right)_{1\leq i,j \leq k}, \quad \mf{C}_{ij}=\mf{C}(f_{i},f_{j}).$$
By the Wick--Isserlis formula the above integral is zero if $k$ odd while
$$\mf{S}_{k}(f_{1},\cdots,f_{k})=\frac{1}{2^{k/2}(k/2)!}\sum_{\sigma\in\Sigma_{k}}\prod_{j=1}^{k/2}\mf{C}(f_{\sigma(2j-1)},f_{\sigma(2j)}),$$
if $k$ is even. Here and in the sequel, the notation $\Sigma_{k}$ stands for the symmetric group of degree $k$. 
\par In the forthcoming subsection we proceed to analyze the Hilbert space $L^{2}(\mathscr{S}'(\R^{d}),\mu_{\mf{C}})$ under the perspective of the results of this article.

\subsection{Analysis of the free field} Let $\mc{R}$ be the class of finite dimensional subspaces of $\mathscr{S}(\R^{d})$ and consider the diagram $ (R^{a},\mu_{R},\pi_{RF})$ of $\mf{M}$ represented by the functor $\mf{R}:\mc{R}\to\mf{M}$. Recall that for each $R,F\in\mc{R}$ with $R\leq F$, $\pi_{RF}$ are the measurable functions defined by
$$
\pi_{RF}:(F^{a},\mu_{F})\longrightarrow (R^{a},\mu_{R}), \quad \ell_{F}\mapsto \ell_{F}|_{R}, \quad R\leq F.$$
Recall also that for each $R\in\mc{R}$, the map $\varphi_{R}$ is given through
$$\varphi_{R}: \mathscr{S}'(\R^{d})\longrightarrow R^{a}, \quad \varphi_{R}(\phi)=\phi|_{R}.$$
In this case, the space $\mathscr{L}^{2}(\mf{R})$ is given explicitly by
\begin{equation*}
	\mathscr{L}^{2}(\mf{R}):=\Big\{(f_{R})_{R\in \mc{R}}\in \prod_{R\in \mc{R}}L^{2}(R^{a},\mu_{R}) \, : \, (f_{R})_{R\in \mc{R}} \text{ is co-Cauchy}\Big\}\Big\slash \sim,
\end{equation*}
where two sequences $(f_{R})_{R\in \mc{R}}, (g_{R})_{R\in \mc{R}}\in \prod_{R\in \mc{R}}L^{2}(R^{a},\mu_{R})$ are related, $(f_{R})_{R\in \mc{R}}\sim(g_{R})_{R\in \mc{R}}$, if by definition,
\begin{equation*}
	\lim_{R\in \mc{R}} \, \|f_{R}-g_{R}\|_{L^{2}(\mu_{R})}=0.
\end{equation*}
By Theorem \ref{T4.2}, the space $\mathscr{L}^{2}(\mf{R})$ is isometrically isomorphic to $L^{2}(\mathscr{S}'(\R^{d}),\mu_{\mf{C}})$ via the isometry
\begin{equation*}
	\mf{L}_{2}: \mathscr{L}^{2}(\mf{R}) \longrightarrow L^{2}(\mathscr{S}'(\R^{d}),\mu_{\mf{C}}), \quad (f_{R})_{R}\mapsto \lim_{R\in\mc{R}}\varphi^{\ast}_{R}(f_{R}).
\end{equation*}
with inverse
\begin{equation}
	\label{EL21}
	\mf{L}^{-1}_{2}: L^{2}(\mathscr{S}'(\R^{d}),\mu_{\mf{C}}) \longrightarrow \mathscr{L}^{2}(\mf{R}), \quad f\mapsto ([\varphi_{R}^{\ast}]^{-1}(\mathbb{E}_{R}(f)))_{R\in\mc{R}},
\end{equation}
where $\mathbb{E}_{R}(f):=\mathbb{E}[f \, | \, \mc{B}^{R}]$ is the conditional expectation of $f$ conditioned by the $\sigma$-algebra 
$$\mc{B}^{R}:=\{\varphi_{R}^{-1}(B) \, : \, B\in \mc{B}_{R}\}.$$
The main result of this subsection expresses the inverse operator $\mf{L}_{2}^{-1}$ in terms of orthogonal projections onto some finite dimensional subspaces of $L^{2}(\mathscr{S}'(\R^{d}),\mu_{\mf{C}})$. 
\par Let us introduce some results and notation in order to reach our goal. Recall that given $k_{1},\cdots, k_{n}\in \N\cup\{0\}$ and real-valued random variables $X_{1},\cdots,X_{n}$ with finite moments, the \textit{Wick product} $\normord{X_{1}^{k_{1}}\cdots X_{n}^{k_{n}}}$ is a polynomial in $X_{1},\cdots, X_{n}$ of total degree $k_{1}+\cdots +k_{n}$ defined recursively as follows:
\begin{equation*}
	\left\{
	\begin{array}{l}
		\normord{X_{1}^{0}\cdots X_{n}^{0}} \, =1, \\
		\frac{\partial}{\partial X_{i}}\normord{X_{1}^{k_{1}}\cdots X_{n}^{k_{n}}} \, =k_{i} \normord{X_{1}^{k_{1}}\cdots X_{i}^{k_{i}-1}\cdots X_{n}^{k_{n}}}, \quad \text{for each } 1\leq i \leq n \text{ with } k_{i}\geq 1, \\
		\mathbb{E}(\normord{X_{1}^{k_{1}}\cdots X_{n}^{k_{n}}})=0, \quad \text{ for not all } k_{i}=0.
	\end{array}\right.
\end{equation*}
We refer the reader to Simon \cite[pp. 9--12]{Si} for properties and consequences of the definition of the Wick product. We denote by $\Gamma_{n}$ the closure of the subspaces
$$\spann\{\normord{\phi(f_{1})\cdots \phi(f_{n})}\  : \, f_{1},\cdots, f_{n}\in\mathscr{S}(\R^{d})\}\subset L^{2}(\mathscr{S}'(\R^{2}),\mu_{\mf{C}}).$$
Due to the properties of the Wick product, it is easy to see that $\Gamma_{n}\perp \Gamma_{m}$ for each $n\neq m$. The Wiener--Itô--Segal decomposition (see for instance Obata \cite[Th. 2.3.3]{O}) establishes that 
\begin{equation*}
	L^{2}(\mathscr{S}'(\R^{2}),\mu_{\mf{C}})=\bigoplus_{n\in\N} \Gamma_{n},
\end{equation*}
where the right hand side means the completed orthogonal direct sum.
Thanks to this decomposition it can be proved that for each contraction $T: H^{-1}(\R^{d})\to H^{-1}(\R^{d})$, there exists a unique (lifted) contraction $\Gamma[T]:	L^{2}(\mathscr{S}'(\R^{d}),\mu_{\mf{C}})\to	L^{2}(\mathscr{S}'(\R^{d}),\mu_{\mf{C}})$ satisfying
\begin{equation*}
	\Gamma[T](\normord{\phi(f_{1})\cdots \phi(f_{n})})= \, \normord{\phi(T(f_{1}))\cdots \phi(T(f_{n}))}
\end{equation*}
for each $f_{1},\cdots, f_{n}\in\mathscr{S}(\R^{d})$. For a proof of these facts we refer the reader to Simon \cite[\S 1.4]{Si}. Given a finite dimensional subspace $R\in\mc{R}$, we can regard $R$ as a subspace of $H^{-1}(\R^{d})$ under the inclusion $\mathscr{S}(\R^{d})\hookrightarrow H^{-1}(\R^{d})$. Theorem III.8 of Simon \cite{Si} states that
\begin{equation}
	\label{EL211}
\mathbb{E}_{R}(f)=\Gamma[P_{R}](f),
\end{equation}
for each $f\in L^{2}(\mathscr{S}'(\R^{d}),\mu_{\mf{C}})$, where $P_{R}:H^{-1}(\R^{d})\to H^{-1}(\R^{d})$ the orthogonal projection onto $R$.  Then, comparing \eqref{EL21} and \eqref{EL211}, we obtain the main result of this subsection.
\begin{theorem}
	The isomorphism $\mf{L}^{-1}_{2}: L^{2}(\mathscr{S}'(\R^{d}),\mu_{\mf{C}}) \longrightarrow \mathscr{L}^{2}(\mf{R})$ can be alternatively described as
	$$\mf{L}^{-1}_{2}(f)=([\varphi_{R}^{\ast}]^{-1}(\Gamma[P_{R}](f)))_{R\in\mc{R}}, \quad f\in L^{2}(\mathscr{S}'(\R^{d}),\mu_{\mf{C}}).$$
	Therefore, for each $f\in L^{2}(\mathscr{S}'(\R^{d}),\mu_{\mf{C}})$, the following identity holds:
	\begin{equation*}
		\int_{\mathscr{S}'(\R^{d})} f(\phi) \, \mathrm{d}\mu_{\mf{C}}(\phi)=\lim_{R\in \mc{R}} \, \int_{R^{a}} \Gamma[P_{R}](f) \, \mathrm{d}\mu_{R}.
	\end{equation*}
\end{theorem}
\subsection{Analysis of interacting fields} The main interest in QFT lies in interacting fields. The program of constructing the path measure $\mu$ increases in difficulty with the dimension $d$ of space-time. Here we focus on small polynomial non-linearities in the space-time dimension $d=2$, commonly known as $\mc{P}(\phi)_{2}$-theories. The construction of the measure for this theories was completed by Nelson \cite{Ne}, Guerra--Rosen--Simon \cite{GRS}, Glim--Jaffe \cite{GJ0} and Dimock--Glimm \cite{DG}. In finite volume (cutoff) $\mc{P}(\phi)_{2}$-theories, the measure $\mu$ can be expressed as 
$$\mathrm{d}\mu=e^{-V} \, \mathrm{d}\mu_{\mf{C}},$$
where $\mu_{\mf{C}}$ is the Gaussian measure on $\mathscr{S}'(\R^{2})$ with covariance $\mf{C}$ and 
\begin{equation}
	\label{Lp}
e^{-V}\in \bigcap_{p<\infty} L^{p}(\mathscr{S}'(\R^{2}),\mu_{\mf{C}}).
\end{equation}
Let us recall the definition of $V\in L^{2}(\mathscr{S}'(\R^{2}),\mu_{\mf{C}})$ for the $\phi^{k}_{2}$-theory, $k\in \N$, with compactly supported cutoff function $g\in L^{p}(\R^{2})$, $p>1$. Consider a compactly supported smooth function $h\in \mc{C}^{\infty}_{0}(\R^{2})$ with $h\geq 0$ and $\int h(x) \, \mathrm{d}x=1$. Define the approximate $\delta$-function centered at $x\in \R^{2}$ by
$$\delta_{n,x}(y):=n^{2} \, h(n(x-y)), \quad y\in \R^{2}, \ n\in\N.$$
Note that $\delta_{n,x}\in\mathscr{S}(\R^{2})$ and $\delta_{n,x}\to \delta_{n}$ as $n\to \infty$ in $\mathscr{S}'(\R^{2})$, where $\delta_{x}$ is the Dirac delta function centered at $x$. We define $V$ as the limit
\begin{equation}
	\label{Lim}
	V(\phi):=\lim_{n\to\infty}\int_{\R^{2}} \normord{\phi^{k}(\delta_{n,x})}\, g(x) \, \mathrm{d}x \quad \text{ in } L^{2}(\mathscr{S}'(\R^{2}),\mu_{\mf{C}}),
\end{equation}
where $\normord{\phi^{k}}$ is the Wick power given explicitly by
\begin{equation*}
	\normord{\phi^{k}(f)} \, = \sum_{j=0}^{[k/2]}\frac{(-1)^{j} \, k!}{(k-2j)! \, j! \, 2^{j}}\, \mf{C}(f,f)^{j}\, \phi(f)^{k-2j}, \quad f\in\mathscr{S}(\R^{2}).
\end{equation*}
The convergence of the limit \eqref{Lim} follows from Simon \cite[Th. V.3]{Si} or Glim--Jaffe \cite[Pr. 8.5.1]{GJ}. Moreover, Theorem 8.5.3 of \cite{GJ} shows that the limit \eqref{Lim} also holds in $L^{p}(\mu_{\mf{C}})$ for each $p\geq 1$. Under this definition of $V$, property \eqref{Lp} is a consequence of \cite[Th. V.7]{Si} or \cite[Th. 8.6.2]{GJ}. Moreover, 
\begin{equation}
	\label{LimE}
	e^{-V}=\lim_{n\to\infty}\exp\left(-\int_{\R^{2}} \normord{\phi^{k}(\delta_{n,x})} \, g(x) \, \mathrm{d}x\right) \quad \text{ in } L^{p}(\mathscr{S}'(\R^{2}),\mu_{\mf{C}}), \, p\geq 1.
\end{equation}
Subsequently, for the sake of notation, we will denote 
$$\normord{\phi^{k}_{n}(g)} \, = \int_{\R^{2}} \normord{\phi^{k}(\delta_{n,x})} \, g(x) \, \mathrm{d}x,$$
for each compactly supported $g\in L^{p}(\R^{2})$.
\par We proceed to describe the functions $\normord{\phi^{k}_{n}(g)}$ and $\exp(-\normord{\phi^{k}_{n}(g)} )$ under the categorical schemes of this article. Fix $n\in \N$ and consider the family of finite dimensional subspaces
$$H_{m,n}:=\text{span}\left\{\delta_{n,x} \, : \, x\in \tfrac{1}{2^{m}}\mathbb{Z}^{2}, \, |x|\leq m\right\}\subset \mathscr{S}(\R^{2}), \quad m\in \N.$$
Subsequently, we will denote $\mf{N}_{m}:=\{x\in\tfrac{1}{2^{m}}\mathbb{Z}^{2} \, : \, |x|\leq m  \}$. Note that $(H_{m,n})_{m\in\N}$ forms a filtered family of subspaces of $\mathscr{S}(\R^{2})$:
\begin{equation*}
	H_{1,n}\leq H_{2,n} \leq \cdots \leq H_{m,n} \leq \cdots
\end{equation*}
Suppose that the function $h$ has a strictly positive Fourier transform, that is, $\hat{h}(\xi)\neq 0$ for every $\xi\in \R^{2}$. Then, making use of standard Fourier transform methods for the space of tempered distributions, it can be proved that the subspace 
$$\text{span}\{\delta_{n,x} \, : \, x\in \mathbb{Q}^{2}\}\subset \mathscr{S}(\R^{2}),$$
is dense in $\mathscr{S}(\R^{2})$. Hence 
$\bigoplus_{m\in\N} H_{m,n}$
is dense in $\mathscr{S}(\R^{2})$. Applying the techniques of Theorem \ref{T4.5.2}, we obtain that for each $p\geq 1$, the operator 
\begin{equation}
	\label{ISOM}
	\mf{F}_{p,n}:\mathscr{L}^{p}(\mf{H})\longrightarrow L^{p}(\mathscr{S}'(\R^{2}),\mu_{\mf{C}}), \quad (f_{m})_{m\in\N}\mapsto \lim_{m\to\infty} \varphi^{\ast}_{m,n}(f_{m}),
\end{equation}
is an isometric isomorphism,
where $\mf{H}:\N\to \mf{M}$ is the diagram $\mf{H}\equiv (H_{m,n}^{a},\mu_{m,n},\pi_{m m', n})$ with 
$$\mu_{m,n}:=\mu_{H_{m,n}} \quad \text{and} \quad \pi_{mm', n}:=\pi_{H_{m,n}H_{m',n}}.$$ 
In \eqref{ISOM}, for each $m\in \N$, $\varphi^{\ast}_{m,n}$ stands for the application of the $L^{p}$-functor to
\begin{equation}
	\label{defp}
	\varphi_{m,n}:\mathscr{S}'(\R^{2}) \longrightarrow H^{a}_{m,n}, \quad \varphi_{m,n}(\phi):=\phi|_{H_{m,n}}=\sum_{\ell\in \mf{N}_{m}} \phi(\delta_{n,\ell}) \, \delta_{n,\ell}^{a}.
\end{equation}
The next result gives the sequence $(f_{m})_{m\in \N}\in\mathscr{L}^{p}(\mf{H})$ that corresponds with the $L^{p}$-functions $\normord{\phi^{k}_{n}(g)}$ and $\exp(-\normord{\phi^{k}_{n}(g)})$ via the isometry $\mf{F}_{p,n}$.
\begin{theorem}
	\label{Th6.2}
	Let $n\in\N$, $p\geq 1$ and suppose that $g$ is Riemann integrable. Let $(f_{m})_{m\in\N}\in \mathscr{L}^{p}(\mf{H})$ be given by
	\begin{equation*}
		f_{m}(\mathbf{z})=\frac{1}{2^{2m}}\sum_{\ell\in \mf{N}_{m}} \normord{z^{k}_{\ell}} \, g(\ell), \quad \mathbf{z}:=(z_{\ell})_{\ell\in \mf{N}_{m}}\equiv \sum_{\ell\in\mf{N}_{m}}z_{\ell}\, \delta_{n,\ell}^{a}\in H_{m,n}^{a}.
	\end{equation*}
	Then, the following identities hold:
	\begin{equation}
		\label{Ef2}
		\mf{F}_{p,n}(f_{m})_{m\in\N}=\int_{\R^{2}} \normord{\phi^{k}(\delta_{n,x})} \, g(x) \, \mathrm{d}x.
	\end{equation}
		\begin{equation}
	\label{Efe}
	\mf{F}_{p,n}(\exp(-f_{m}))_{m\in\N}=\exp\left(-\int_{\R^{2}} \normord{\phi^{k}(\delta_{n,x})} \, g(x) \, \mathrm{d}x\right).
\end{equation}
\end{theorem}
\begin{proof}
In the following lines, we will omit the subscript $n$ in the notations $H_{m,n}$, $\mu_{m,n}$, $\pi_{mm',n}$ and $\varphi_{m,n}$ as it is fixed throughout the proof.
Under the hypothesis on $g$, Proposition 8.7.1 of \cite{GJ} establishes that
\begin{equation}
	\label{E5.9}
	\lim_{m\to\infty}\frac{1}{2^{2m}}\sum_{\ell\in \mf{N}_{m}} \normord{\phi^{k}(\delta_{n,\ell})} \, g(\ell)=\int_{\R^{2}}\normord{\phi^{k}(\delta_{n,x})} \, g(x) \, \mathrm{d}x,
\end{equation}
in $L^{p}(\mathscr{S}'(\R^{2}),\mu_{\mf{C}})$. We proceed to prove the first identity \eqref{Ef2}. We start by proving that the sequence $(f_{m})_{m\in \N}$ is co-Cauchy.  It is convenient to recall that for each $m_{1},m_{2}\in \N$, $m_{1}<m_{2}$, the map $\pi_{m_{1}m_{2}}:H^{a}_{m_{2}}\to H^{a}_{m_{1}}$ is given by
$$\pi_{m_{1}m_{2}}:H^{a}_{m_{2}}\longrightarrow H^{a}_{m_{1}}, \quad \pi_{m_{1}m_{2}}(\ell)=\ell\,|_{H_{m_{1}}}, \quad \ell\in H^{a}_{m_{2}}.$$
Then, a simple computation gives
\begin{align*}
	\|f_{m_{2}}-\pi^{\ast}_{m_1 m_2}(f_{m_1})\|_{L^{p}(\mu_{m_{2}})}^{p}
	& =\int_{H_{m_{2}}}\Big| \frac{1}{2^{2m_{2}}}\sum_{\ell\in\mf{N}_{m_{2}}} \normord{z_{\ell}^{k}} g(\ell) - \frac{1}{2^{2m_{1}}}\sum_{\ell\in\mf{N}_{m_{1}}} \normord{z_{\ell}^{k}} g(\ell)\Big|^{p} \, \mathrm{d}\mu_{m_{2}}(\mathbf{z}) \\
	& =\int_{\mathscr{S}'} \Big| \frac{1}{2^{2m_{2}}}\sum_{\ell\in\mf{N}_{m_{2}}} \normord{ \phi^{k}(\delta_{n,\ell})} g(\ell) - \frac{1}{2^{2m_{1}}}\sum_{\ell\in\mf{N}_{m_{1}}}\normord{\phi^{k}(\delta_{n,\ell})} g(\ell)\Big|^{p} \, \mathrm{d}\mu_{\mf{C}} \\
	&=\Big\|\frac{1}{2^{2m_{2}}}\sum_{\ell\in\mf{N}_{m_{2}}} \normord{\phi^{k}(\delta_{n,\ell})} g(\ell) - \frac{1}{2^{2m_{1}}}\sum_{\ell\in\mf{N}_{m_{1}}} \normord{\phi^{k}(\delta_{n,\ell})} g(\ell)\Big\|_{L^{p}(\mu_{\mf{C}})}^{p}.
\end{align*}
Therefore, it follows from the $L^{p}$-convergence of the limit \eqref{E5.9} that $(f_{m})_{m\in\N}$ is co-Cauchy and $(f_{m})_{m\in\N}\in \mathscr{L}^{p}(\mf{H})$.
On the other hand, recalling the definition of $\varphi_{m}$, equation \eqref{defp}, we deduce from \eqref{E5.9} that
\begin{align*}
	\lim_{m\to\infty}\|\normord{\phi^{k}_{n}(g)} - \, \varphi_{m}^{\ast}(f_{m})\|_{L^{p}(\mu_{\mf{C}})}=\lim_{m\to\infty}\Big\|\normord{\phi^{k}_{n}(g)}- \, \frac{1}{2^{2m}}\sum_{\ell\in \mf{N}_{m}}\normord{\phi^{k}(\delta_{n,\ell})} g(\ell) \Big\|_{L^{p}(\mu_{\mf{C}})}=0.
\end{align*}
This concludes the proof of equation \eqref{Ef2}. For the second identity \eqref{Efe}, we follow the scheme of the proof of Proposition 8.7.2 in \cite{GJ}. We introduce the function
$$\mf{P}_{m}(t):=-t \, \varphi_{m}^{\ast}(f_{m})-(1-t) \normord{\phi^{k}_{n}(g)} \, , \quad t\in[0,1], \, m\in \N.$$
Then, applying H\"{o}lder's inequality, we deduce that
\begin{align*}
	& \|\exp(-\normord{\phi^{k}_{n}(g)} )-\varphi_{m}^{\ast}(\exp(-f_{m}))\|_{L^{p}(\mu_{\mf{C}})}=\|\exp(\mf{P}_{m}(0))-\exp(\mf{P}_{m}(1))\|_{L^{p}(\mu_{\mf{C}})}\\
	&\leq \int_{0}^{1}\Big\| \frac{d}{dt}\exp(\mf{P}_{m}(t)) \, \mathrm{d}t \Big\|_{L^{p}(\mu_{\mf{C}})}\leq \sup_{t\in[0,1]} \|(\normord{\phi^{k}_{n}(g)} - \, \varphi_{m}^{\ast}(f_{m})) \exp(\mf{P}_{m}(t))\|_{L^{p}(\mu_{\mf{C}})} \\
	&\leq \|\normord{\phi^{k}_{n}(g)}- \, \varphi_{m}^{\ast}(f_{m})\|_{L^{2p}(\mu_{\mf{C}})} \, \sup_{t\in[0,1]} \|\exp(\mf{P}_{m}(t))\|_{L^{2p}(\mu_{\mf{C}})}.
\end{align*}
The first factor converges to zero as $m\to\infty$ by the arguments used to establish identity \eqref{E5.9} and the second factor is uniformly bounded by the bounds concerning the proof of \eqref{Lim}, see for instance \cite[Th. 8.6.2]{GJ}. This proves identity \eqref{Efe} and concludes the proof.
\end{proof}
A direct consequence of Theorem \ref{Th6.2} is the following integral formulae.
\begin{corollary}
	\label{CC}
	Let $n\in\N$. Then, the following identities hold:
	\begin{equation*}
		\int_{\mathscr{S}'(\R^{2})} \exp(-\normord{\phi^{k}_{n}(g)}) \, \mathrm{d}\mu_{\mf{C}}=\lim_{m\to\infty} \int_{H_{m,n}^{a}}\exp\left(-\frac{1}{2^{2m}}\sum_{\ell\in \mf{N}_{m}}\normord{z^{k}_{\ell}} g(\ell)\right) \, \mathrm{d}\mu_{m,n}(\mathbf{z}),
	\end{equation*}
\begin{equation*}
	\int_{\mathscr{S}'(\R^{2})} e^{-V} \, \mathrm{d}\mu_{\mf{C}}=\lim_{n\to\infty}\lim_{m\to\infty} \int_{H_{m,n}^{a}}\exp\left(-\frac{1}{2^{2m}}\sum_{\ell\in \mf{N}_{m}} \normord{z^{k}_{\ell}} g(\ell)\right) \, \mathrm{d}\mu_{m,n}(\mathbf{z}).
\end{equation*}
\end{corollary}
\noindent In order to give the complete formulas in Corollary \ref{CC}, we recall that the measure $\mu_{m,n}$ can be explicitly given by 
$$\mathrm{d}\mu_{m,n}(\mathbf{z})=\frac{\exp\left\{-\frac{1}{2}\sum_{i,j\in\mf{N}_{m}}\mf{C}^{-1}_{ij,n}z_{i}z_{j}\right\}}{\sqrt{(2\pi)^{|\mf{N}_{m}|}\det[\mf{C}_{m,n}]}} \, \mathrm{d}^{|\mf{N}_{m}|}\mathbf{z},$$
where we have denoted 
\begin{equation*}
	\mf{C}_{m,n}\equiv (\mf{C}(\delta_{n,i},\delta_{n,j}))_{i,j\in \mf{N}_{m}}, \quad \mf{C}_{m,n}^{-1}\equiv (\mf{C}^{-1}_{ij,n})_{i,j\in\mf{N}_{m}},
\end{equation*}
where $\mf{C}_{m,n}^{-1}$ is the inverse matrix of $\mf{C}_{n,m}$. 
\par Subsequently, to emphasize the dependence of $n$ in $V$, let us denote by $V_{n}:\mathscr{S}'(\R^{2})\to \R$ the function defined through $V_{n}(\phi):= \, \normord{\phi^{k}_{n}(g)}$ and also the functions
$$V_{n,m}: H_{m,n}^{a}\longrightarrow \R, \quad V_{n,m}(\mathbf{z}):=\exp\left(-\frac{1}{2^{2m}}\sum_{\ell\in \mf{N}_{m}} \normord{z^{k}_{\ell}} g(\ell)\right).$$
Moreover, for each $f\in L^{p}(\mathscr{S}'(\R^{2}),\mu_{\mf{C}})$, the notation $\mathbb{E}_{n,m}(f):=\mathbb{E}[f \, | \, \mc{B}^{n,m}]$ stands for the conditional expectation of $f$ conditioned by the $\sigma$-algebra $\mc{B}^{n,m}:=\{\varphi_{m,n}^{-1}(B) \, : \, B\in \mc{B}_{m,n}\}$, where $\mc{B}_{m,n}$ is the Borel $\sigma$-algebra of $H^{a}_{m,n}$. Under this notation, we can state another consequence of Theorem \ref{Th6.2}.
\begin{theorem}
	\label{th5.3}
	Let $n\in \N$ and $f\in L^{q}(\mathscr{S}'(\R^{2}),\mu_{\mf{C}})$, $q>1$. Then, for every $1\leq r<p$, $fe^{-V_{n}}\in L^{r}(\mathscr{S}'(\R^{2}),\mu_{\mf{C}})$ and 
	\begin{equation}
		\label{Eq6.8}
	\mf{F}_{r,n}(f_{m} \, e^{-V_{n,m}})_{m\in \N}=f \, e^{-V_{n}},
	\end{equation}
	where $f_{m}:=[\varphi_{m,n}^{\ast}]^{-1}(\mathbb{E}_{m,n}(f))$ for each $m\in\N$. Therefore, for every $f\in L^{q}(\mu_{\mf{C}})$, $q>1$, the following limit holds:
	$$\int_{\mathscr{S}'(\R^{2})}f \, e^{-V_{n}} \ \mathrm{d}\mu_{\mf{C}}=\lim_{m\to\infty}\int_{H_{m,n}^{a}}\mathbb{E}_{m,n}(f) \ e^{-V_{n,m}} \ \mathrm{d}\mu_{m,n}.$$
\end{theorem}
\begin{proof}
	As in the proof of Theorem \ref{Th6.2}, we will omit the subscript $n$ in the notation $\varphi_{m,n}$. The first statement follows by simply applying Hölder's inequality:
	\begin{equation}
		\|f e^{-V_{n}}\|_{L^{r}(\mu_{\mf{C}})} \leq \|f\|_{L^{q}(\mu_{\mf{C}})} \, \|e^{-V_{n}}\|_{L^{p}(\mu_{\mf{C}})}, \quad \frac{1}{r}=\frac{1}{p}+\frac{1}{q},
	\end{equation}
     and noting that $e^{-V_{n}}\in L^{p}(\mathscr{S}'(\R^{2}),\mu_{\mf{C}})$ for each $p\geq 1$ by equation \eqref{Lp}. To prove identity \eqref{Eq6.8}, we apply a similar argument:
     \begin{align*}
     	\mc{T}_{m}:= & \ \|\varphi_{m}^{\ast}(f_{m} e^{-V_{n,m}}) - f e^{-V_{n}}\|_{L^{r}(\mu_{\mf{C}})}
     	 \leq  \ \|\varphi_{m}^{\ast}(e^{-V_{n,m}})(\varphi_{m}^{\ast}(f_{m})-f)\|_{L^{r}(\mu_{\mf{C}})} \\
     	& + \|f (\varphi_{m}^{\ast}(e^{-V_{n,m}})-e^{-V_{n}})\|_{L^{r}(\mu_{\mf{C}})} \leq \|\varphi_{m}^{\ast}(e^{-V_{n,m}})\|_{L^{p}(\mu_{\mf{C}})} \, \|\varphi_{m}^{\ast}(f_{m})-f\|_{L^{q}(\mu_{\mf{C}})} \\
     	& +\|f\|_{L^{q}(\mu_{\mf{C}})} \, \|\varphi_{m}^{\ast}(e^{-V_{n,m}})-e^{-V_{n}}\|_{L^{p}(\mu_{\mf{C}})}.
     \end{align*}
 From the definition of $(f_{m})_{m\in\N}$, it follows that $\|\varphi_{m}^{\ast}(f_{m})-f\|_{L^{q}(\mu_{\mf{C}})}\to 0$ as $m\to\infty$. Analogously, from \eqref{Efe}, it follows $\|\varphi_{m}^{\ast}(e^{-V_{n,m}})-e^{-V_{n}}\|_{L^{p}(\mu_{\mf{C}})}\to 0$ as $m\to\infty$. Finally, as the sequence $(\varphi_{m}^{\ast}(e^{-V_{n,m}}))_{m\in\N}$ converges, its $L^{p}$-norm is uniformly bounded. Therefore, we obtain that $\mc{T}_{m}\to 0$ as $m\to\infty$. This concludes the proof.
\end{proof}
\noindent Consequently, applying Theorem \ref{th5.3}, given $\delta_{n,x_{1}},\cdots,\delta_{n,x_{k}}\in \mathscr{S}(\R^{d})$ for some $x_{1},\cdots, x_{k}\in \bigcup_{m\in\N}\mf{N}_{m}$,
the corresponding correlation functions can be computed via a finite-dimensional approximations as
\begin{equation*}
\mf{S}_{k}(x_{1},\cdots,x_{k}):=\lim_{n\to\infty}\int_{\mathscr{S}'(\R^{2})}\phi(\delta_{n,x_{1}})\cdots\phi(\delta_{n,x_{k}}) \ e^{-V_{n}} \, \mathrm{d}\mu_{\mf{C}}(\phi)
\end{equation*}
\begin{equation*}
=\lim_{n\to\infty}\lim_{m\to\infty}\int_{H^{a}_{m,n}} z_{x_{1}}\cdots z_{x_{k}} \, \exp\left(-\frac{1}{2^{2m}}\sum_{\ell\in \mf{N}_{m}}\normord{ z^{k}_{\ell}} g(\ell)\right) \, \mathrm{d}\mu_{m,n}(\mathbf{z}).
\end{equation*}
This approach is different to the usual formal perturbation theory \cite[\S 8.4]{GJ} where one expands $e^{-V}$ in power series and computes the (interaction-free) integrals in each term by using Feymnan diagrams. This approach is expected to be useful for the computation of explicit correlations and will be the subject of future work.

\section{Application II: Unitary representations}\label{Se6}

In this section we use the abstract setting developed along Section \ref{Se3} to obtain certain unitary representations. More precisely, we set out to answer the following question. Given a family of unitary representations on $L^{2}(X_{i},\mu_{i})$, can we construct an unitary representation on $L^{2}(X_{\infty},\mu_{\infty})$ by using the identification of Theorem \ref{T3.2}? Before presenting the main results, it is useful to introduce some technical aspects. For the sake of simplicity, we will only work with the directed set $\N$ instead of a general directed set $I$. Given a Hilbert space $H$, we denote by $\mc{U}(H)$ the space of unitary operators $T: H\to H$. It forms a topological group under the operator composition $\circ$. The category of topological groups will be subsequently denoted by $\mathfrak{GTop}$. Let $\mf{F}:\N\to \mf{M}$, $\mf{F}\equiv (X_{n},\mu_{n},\pi_{nm})$, be a convergent diagram in $\mathfrak{M}$ with limit $(X_{\infty},\mu_{\infty},\pi_{n})$. The isometric isomorphism $\mf{I}_{2}: \mathscr{L}^{2}(\mf{F})\to L^{2}(\mu_{\infty})$ established in Theorem \ref{T3.2} induces the following isomorphism on $\mathfrak{GTop}$,
\begin{equation*}
	\Psi: \mc{U}(\mathscr{L}^{2}(\mf{F}))\longrightarrow \mc{U}(L^{2}(\mu_{\infty})), \quad T\mapsto \mf{I}_{2}\circ T \circ \mf{I}_{2}^{-1}.
\end{equation*}
Consider the following subgroup of the direct product,
\begin{equation*}
	\mc{U}(L^{2}\mf{F}):=\Big\{(T_{n})_{n}\in \prod_{n\in \N}\mc{U}(L^{2}(\mu_{n})): \exists \ell\in \N, \, \pi_{nm}^{\ast}\circ T_{n} = T_{m} \circ \pi^{\ast}_{nm}, \, \ell \leq n\leq m\Big\},
\end{equation*}
endowed with the product topology. Naturally, the condition in the definition of $\mc{U}(L^{2}\mf{F})$ is equivalent to the commutativity in the large of the diagram of Figure \ref{F10.0}. 
\begin{figure}[h!]
	\begin{tikzcd}[row sep=large, column sep=large]
		L^{2}(\mu_{n}) \arrow{r}{T_{n}} \arrow{d}{\pi^{\ast}_{nm}} & L^{2}(\mu_{n}) \arrow{d}{\pi^{\ast}_{nm}} \\
		L^{2}(\mu_{m}) \arrow{r}{T_{m}} &  L^{2}(\mu_{m})
	\end{tikzcd}
	\caption{Diagram VI}
	\label{F10.0}
\end{figure}

\noindent The group $\mc{U}(L^{2}\mf{F})$ can be seen as the set consisted on \textit{unitary} natural transformations of the functor $L^{2}\mf{F}: \N \to \mf{Ban}$. The definition of $\mc{U}(L^{2}\mf{F})$ enables to define the following morphism on $\mf{GTop}$,
\begin{equation}
	\Phi: \mc{U}(L^{2}\mf{F})\longrightarrow \mc{U}(\mathscr{L}^{2}(\mf{F})), \quad \Phi(T_{n})_{n}[(f_{n})_{n}]:=(T_{n}(f_{n}))_{n}.
\end{equation}
By applying the isomorphism $\Psi$, we obtain the morphism,
\begin{equation}
	\hat\Phi: \mc{U}(L^{2}\mf{F})\longrightarrow \mc{U}(L^{2}(\mu_{\infty})), \quad \hat{\Phi}:=\Psi\circ \Phi.
\end{equation}
Along this section, we will work with metric (topological) groups, that is, topological groups $G$ whose topology is induced by a metric $d$. We can suppose without lost of generality that every metric on this section is bounded by $1$. Let $G$ be a metric group and $H$ a Hilbert space. By an (strongly continuous) unitary representation, we mean a group homomorphism $\rho: G\to \mc{U}(H)$ such that $g\mapsto \rho(g)[u]$ is continuous for every $u\in H$. Given a family $(G_{n},d_{n})_{n\in \N}$ of metric groups, $(H_{n})_{n\in \N}$ of Hilbert spaces and $\rho_{n}:G_{n}\to \mc{U}(H_{n})$ of unitary representations, we say that $(\rho_{n})_{n\in \N}$ is \textit{uniformly strongly continuous} if for every $u\in H$ and $\varepsilon>0$, there exists $\delta>0$, independent of $n\in \N$, such that
$$g_{n},h_{n}\in G_{n}, \ \ d_{n}(g_{n},h_{n})<\delta \quad \Longrightarrow \quad  \|\rho_{n}(g_{n})[u]-\rho_{n}(h_{n})[u]\|_{H_{n}}<\varepsilon.$$
The first result of this section reads as follows.

\begin{theorem}
	\label{ThP}
	Let $(X_{n},\mu_{n},\pi_{nm})$ be a convergent diagram in $\mathfrak{M}$ with limit $(X_{\infty},\mu_{\infty},\pi_{n})$ and $(G_{n})_{n\in \N}$, $G\equiv (G,\cdot)$, be a family of metric groups. Let $G$ be a subgroup of the product $\prod_{n\in \N}G_{n}$ endowed with the uniform topology and $(\rho_{n})_{n\in \N}$ be a family of uniformly strongly continuous unitary representations,
	$$\rho_{n}: G_{n} \longrightarrow \mc{U}(L^{2}(\mu_{n})),$$
	such that there exists $\ell\in \N$ for which
	\begin{equation}
		\label{CNA}
		\pi_{nm}^{\ast}\circ \rho_{n}(g_{n})=\rho_{m}(g_{m})\circ \pi_{nm}^{\ast}, \quad \text{ for each } \ell \leq n\leq m, \text{ and each } (g_{n})_{n\in \N}\in G.
	\end{equation}
    In other words, the following diagram commutes in the large:
    \begin{equation*}
    	\begin{tikzcd}[row sep=large, column sep=large]
    		L^{2}(\mu_{n}) \arrow{r}{\rho_{n}(g_{n})} \arrow{d}{\pi^{\ast}_{nm}} & L^{2}(\mu_{n}) \arrow{d}{\pi^{\ast}_{nm}} \\
    		L^{2}(\mu_{m}) \arrow{r}{\rho_{m}(g_{m})} &  L^{2}(\mu_{m})
    	\end{tikzcd}
    \end{equation*}
	Then the map
	\begin{equation*}
		\rho: G\longrightarrow \mc{U}(L^{2}(\mu_{\infty})), \quad (g_{n})_{n\in \N} \mapsto \hat{\Phi}(\rho_{n}(g_{n}))_{n\in \N},
	\end{equation*}
	is an unitary representation.
\end{theorem}

\begin{proof}
	Hypothesis \eqref{CNA} implies that $(\rho_{n}(g_{n}))_{n\in\N}\in \mc{U}(L^{2}\mf{F})$ for each $(g_{n})_{n\in \N}\in G$. Hence $\rho$ is well defined. Moreover it is a group homomorphism since
	\begin{align*}
		\rho((g_{n}\cdot h_{n})_{n\in \N}) & =\hat{\Phi}(\rho_{n}(g_{n}\cdot h_{n}))_{n\in \N}=\hat{\Phi}(\rho_{n}(g_{n})\circ \rho_{n}(h_{n}))_{n\in \N} \\
		& = \hat{\Phi}[(\rho_{n}(g_{n}))_{n\in \N}\circ (\rho_{n}(h_{n}))_{n\in \N}]\\
		& =\hat{\Phi}(\rho_{n}(g_{n}))_{n\in \N}\circ \hat{\Phi}(\rho_{n}(h_{n}))_{n\in \N}=\rho(g_{n})_{n\in \N}\circ \rho(h_{n})_{n\in \N},
	\end{align*}
	where we have used the homomorphism property of $\rho_{n}$ and $\hat{\Phi}$, respectively. It remains to prove its strong continuity. First of all, recall that the uniform topology of the product $\prod_{n\in\N} G_{n}$ is induced by the metric 
	$$d_{\infty}((g_{n})_{n},(h_{n})_{n}):=\sup_{n\in\N} \, d_{n}(g_{n},h_{n}), \quad (g_{n})_{n}, (h_{n})_{n}\in \prod_{n\in \N}G_{n}.$$
	It is well defined since we are supposing that the metrics $d_{n}$ are bounded by $1$. Let $f\in L^{2}(\mu_{\infty})$ and $\varepsilon>0$. By the uniform strong continuity of the family $(\rho_{n})_{n\in \N}$, we infer the existence of $\delta>0$, independent of $n\in \N$, such that 
	$$g_{n},h_{n}\in G_{n}, \ \ d_{n}(g_{n},h_{n})<\delta \quad \Longrightarrow \quad  \|\rho_{n}(g_{n})[f]-\rho_{n}(h_{n})[f]\|_{L^{2}(\mu_{n})}<\varepsilon.$$
	Hence, for $(g_{n})_{n\in \N}, (h_{n})_{n\in \N} \in  G$ with $d_{\infty}((g_{n})_{n}, (h_{n})_{n})<\delta$, we deduce,
	\begin{align*}
		\|\rho(g_{n})_{n}[f]-\rho(h_{n})_{n}[f]\|_{L^{2}(\mu_{\infty})} & =\|\mf{I}_{2}\circ (\rho_{n}(g_{n})[f]-\rho_{n}(h_{n})[f])_{n}\circ \mf{I}^{-1}_{2}\|_{L^{2}(\mu_{\infty})} \\
		& =\|(\rho_{n}(g_{n})[f]-\rho_{i}(h_{n})[f])_{n}\|_{\mathscr{L}^{2}} \\
		& =\lim_{n\to\infty} \|\rho_{n}(g_{n})[f]-\rho_{n}(h_{n})[f]\|_{L^{2}(\mu_{n})}<\varepsilon.
	\end{align*}
	This proves the strong continuity of $\rho$ and concludes the proof.
\end{proof}

Let us give an example of Theorem \ref{ThP} applied to Gaussian measures. Let $E$ be an infinite dimensional Fréchet nuclear space and $\mu$ a Gaussian measure on $E'$ with covariance operator $\mf{C}$. Consider a complete system $(\xi_{n})_{n\in\N}\subset E$ and the corresponding convergent diagram $\mf{H}\equiv (H_{n}^{a},\mu_{n},\pi_{nm})$. The  Hilbert spaces $\mathscr{L}^{2}(\mf{H})$ and $L^{2}(E',\mu)$ are isometrically isomorphic via the operator $\mf{F}_{2}:\mathscr{L}^{2}(\mf{H})\to L^{2}(E',\mu)$ introduced in Theorem \ref{T4.5.2}. For each $n\in \N$, we denote by $\Sigma_{n}$ the symmetric group of degree $n$ endowed with the discrete topology. For each $n\in \N$, we consider the unitary representation,
\begin{equation*}
	\rho_{n}:\Sigma_{n}\longrightarrow \mc{U}(L^{2}(H^{a}_{n},\mu_{n})), \quad \rho_{n}(\sigma)[f]:=f(x_{\sigma(1)},x_{\sigma(2)},\cdots,x_{\sigma(n)}).
\end{equation*}
Indeed, the maps $\rho_{n}$ define unitary representations due to the rotationally invariant property of the Gaussian measure (see e.g. \cite[\S  B--11]{Y}). In order to apply Theorem \ref{ThP}, we need to verify property \eqref{CNA} for certain subgroup $G\leq \prod_{n\in \N}\Sigma_{n}$. Given $f\in L^{2}(H^{a}_{n},\mu_{n})$, $(\sigma_{n})_{n\in \N}\in \prod_{n\in \N}\Sigma_{n}$ and $n\leq m$, a simple computation gives
\begin{equation*}
	[\pi_{nm}^{\ast}\circ \rho_{n}(\sigma_{n})](f)=f(x_{\sigma_{n}(1)},x_{\sigma_{n}(2)},\cdots, x_{\sigma_{n}(n)}),
\end{equation*}
\begin{equation*}
	[\rho_{m}(\sigma_{m})\circ \pi^{\ast}_{nm}](f)=f(x_{\sigma_{m}(1)},x_{\sigma_{m}(2)},\cdots,x_{\sigma_{m}(n)}).
\end{equation*}
With these identities, a natural choice for $G$ is the subgroup
$$G:=\Big\{(\sigma_{n})_{n\in \N}\in \prod_{n\in\N}\Sigma_{n} \, : \, \exists N\in \N \text{ such that } \sigma_{m}|_{\{1,\cdots,n\}}=\sigma_{n}, \, N\leq n\leq m \Big\}.$$
Note that each $(\sigma_{n})_{n\in \N}\in G$ can be identified with a unique element $\sigma_{M}\in \Sigma_{M}$ by simply choosing $M$ as the least number $N\in \N$ satisfying $\sigma_{m}|_{\{1,\cdots,n\}}=\sigma_{n}$ for each $N\leq n\leq m$. Let $\hat{\sigma}_{M}: \N\to \N$ be the infinite permutation such that 
$$\hat{\sigma}_{M}|_{\{1,\cdots,M\}}=\sigma_{M}, \quad \hat{\sigma}_{M}(m)=m,\quad \forall m > M.$$
We can establish the group isomorphism
\begin{equation*}
	I: G \longrightarrow \Sigma_{\infty}, \quad (\sigma_{n})_{n\in \N} \mapsto \hat{\sigma}_{M},
\end{equation*}
where $\Sigma_{\infty}$ is the infinite symmetric group, that is, the inductive limit of the increasing sequence of the symmetric groups $\Sigma_{n}$. Elements of $\Sigma_{\infty}$ are bijections $\sigma: \N\to\N$ with finite support $\text{supp}(\sigma):=\{m\in \N : \sigma(m)\neq m\}$. Hence a direct application of Theorem \ref{ThP} gives an unitary representation 
$\rho: \Sigma_{\infty} \to \mc{U}(L^{2}(E',\mu))$ acting by permutation on the corresponding variables. Note that in this example there is no difficulty with the continuity of $\rho$ as we are considering the discrete topology in $\Sigma_{\infty}$.
\par There is another complementary way  of constructing unitary representations from representations on $L^{2}(\mu_{n})$. To introduce it, let us consider the codiagram $\mf{U}: \N\to \mf{GTop}$, $\mf{U}\equiv (\, \mc{U}(L^{2}(\mu_{n})),\Pi_{nm})$, defined through the morphisms
$$\Pi_{nm}:\mc{U}(L^{2}(\mu_{n})) \longrightarrow \mc{U}(L^{2}(\mu_{m})), \quad \Pi_{nm}(T_{n}):=\hat{T}_{nm}\oplus I^{\perp}_{nm},$$
where for each $T_{n}\in \mc{U}(L^{2}(\mu_{n}))$ and each $m\geq n$, the operator $\hat{T}_{nm}$ is defined by
$$\hat{T}_{nm}: \pi_{nm}^{\ast}(L^{2}(\mu_{n})) \longrightarrow \pi_{nm}^{\ast}(L^{2}(\mu_{n})), \quad \pi^{\ast}_{nm}(f) \mapsto \pi^{\ast}_{nm}(T_{n}(f)),$$
and $I_{nm}^{\perp}:[\pi^{\ast}_{nm}(L^{2}(\mu_{n}))]^{\perp}\to[\pi^{\ast}_{nm}(L^{2}(\mu_{n}))]^{\perp}$ is the identity operator. Associated with this codiagram, we have the cocone $(\, \mc{U}(L^{2}(\mu_{\infty})), \Pi_{n})$, where the morphisms $\Pi_{n}$ are defined through
$$\Pi_{n}: \mc{U}(L^{2}(\mu_{n})) \longrightarrow \mc{U}(L^{2}(\mu_{\infty})), \quad \Pi_{n}(T_{n}):=\hat{T}_{n}\oplus I_{n}^{\perp},
$$
where for each $T_{n}\in \mc{U}(L^{2}(\mu_{n}))$ and each $m\geq n$, the operator $\hat{T}_{n}$ is defined by
$$\hat{T}_{n}:\pi_{n}^{\ast}(L^{2}(\mu_{n}))\longrightarrow \pi_{n}^{\ast}(L^{2}(\mu_{n})), \quad \pi_{n}^{\ast}(f)\mapsto \pi_{n}^{\ast}(T_{n}(f)),$$
and $I_{n}^{\perp} : [\pi_{n}^{\ast}(L^{2}(\mu_{n}))]^{\perp}\to [\pi_{n}^{\ast}(L^{2}(\mu_{n}))]^{\perp}$ is the identity operator. Under this notation, the second and last result of this section reads as follows.
\begin{theorem}
	\label{TPR}
	Let $(X_{n},\mu_{n},\pi_{nm})$ be a convergent diagram in $\mathfrak{M}$ with limit $(X_{\infty},\mu_{\infty},\pi_{n})$, $(V_{n})_{n\in \N}$ be a filtration of vector subspaces of a given Fr\'{e}chet space $(W,d)$ and $\rho_{n}: V_{n}\to \mc{U}(L^{2}(\mu_{n}))$ be a sequence of unitary representations such that the following diagram commutes for every $n,m\in \N$, $n\leq m$,
	\begin{equation}
		\label{EqFig}
		\begin{tikzcd}[row sep=large, column sep=large]
			V_{n} \arrow{r}{\rho_{n}} \arrow{d}{i_{nm}} & \mc{U}(L^{2}(\mu_{n})) \arrow{d}{\Pi_{nm}} \\
			V_{m} \arrow{r}{\rho_{m}} &  \mc{U}(L^{2}(\mu_{m}))
		\end{tikzcd}
	\end{equation}
	Then, there exists a unitary representation $\rho: \bigoplus_{n\in \N} V_{n} \to \mc{U}(L^{2}(\mu_{\infty}))$ such that 
	$$\rho \, |_{V_{n}}=\Pi_{n}\circ \rho_{n}, \quad \text{ for every }  n\in\N.$$
\end{theorem}
\begin{proof}
	Let us define $\rho: \bigoplus_{n\in \N} V_{n} \to \mc{U}(L^{2}(\mu_{\infty}))$ by the condition $\rho \, |_{V_{n}}=\Pi_{n}\circ \rho_{n}$ for every $n\in\N$ and let us see that this is well defined. Let $v\in V_{n} \leq V_{m}$, $n \leq m$, then it follows from the cocone property of $(\,\mc{U}(L^{2}(\mu_{\infty})),\Pi_{n})$ and the commutativity of the diagram \eqref{EqFig} that
	$$(\Pi_{n}\circ \rho_{n}) (v) = (\Pi_{m}\circ\Pi_{nm}\circ \rho_{n})(v)=(\Pi_{m}\circ \rho_{m}\circ i_{nm})(v)=(\Pi_{m}\circ \rho_{m})(v).$$
	Therefore $\rho$ is well defined. Now, we proceed to show the strong continuity of $\rho$. Let $\varepsilon>0$, $f\in L^{2}(\mu_{\infty})$. Then, by the strong continuity of each $\rho_{n}$, there exists $\delta>0$ such that 
	$$v, w \in V_{n} \leq V_{m}, \, d(v,w)<\delta \quad \Longrightarrow \quad \|\rho_{m}(v)[P_{m}(f)]-\rho_{m}(w)[P_{m}(f)]\|_{L^{2}(\mu_{m})}<\varepsilon,$$
	where $P_{m}:  L^{2}(\mu_{\infty})\to \pi_{m}^{\ast}(L^{2}(\mu_{m}))\simeq L^{2}(\mu_{m})$ is the orthogonal projection. Hence, given $v,w\in V_{n}\leq V_{m}$, $n\leq m$, such that $d(v,w)<\delta$, we obtain
	\begin{align*}
		\|\rho(v)[f]-\rho(w)[f]\|_{L^{2}(\mu_{\infty})} & = \| (\Pi_{n}\circ \rho_{n}(v))[f]-(\Pi_{m}\circ\rho_{m}(w))[f]\|_{L^{2}(\mu_{\infty})} \\
		& =\|(\Pi_{m}\circ\rho_{m}(v))[f]-(\Pi_{m}\circ\rho_{m}(w))[f]\|_{L^{2}(\mu_{\infty})} \\
		& = \|\hat{\rho}_{m}(v)(P_{m}(f))-\hat{\rho}_{m}(P_{m}(f))\|_{L^{2}(\mu_{\infty})} \\
		& =\|\rho_{m}(v)[P_{m}(f)]-\rho_{m}(w)[P_{m}(f)]\|_{L^{2}(\mu_{m})}<\varepsilon.
	\end{align*}
	This shows the strong continuity of $\rho$ and concludes the proof.
\end{proof}

Let us give an application of this result to Gaussian measures. As before, let $E$ be an infinite dimensional Fréchet nuclear space and $\mu$ a Gaussian measure on $E'$ with covariance operator $\mf{C}$. Consider a complete system $(\xi_{n})_{n\in\N}\subset E$ and the corresponding convergent diagram $\mf{H}\equiv (H_{n}^{a},\mu_{n},\pi_{nm})$. For each $n\in \N$, we denote by $O(\R^{n})$ the orthogonal group in dimension $n$. Along this discussion we will identify $O(\R^{n})\simeq O(H^{a}_{n})$ for each $n\in \N$ and we will consider each $O(\R^{n})$ as a subspace of the space of bounded linear operators of $\ell^{2}$, $\mc{L}(\ell^{2})$. To visualize this embedding, we consider a complete orthogonal basis of $\ell^{2}$, $(e_{n})_{n\in\N}$, and for each $n\in \N$, the orthogonal direct sum decomposition 
$$\ell^{2}=W_{n}\oplus W_{\infty-n}, \quad W_{n}:=\spann\{e_{i} \, : \, i\in\{1,\cdots,n\}\}, \, W_{\infty-n}:=\spann\{e_{i}\, : \, i>n\}.$$
we define the embedding as:
\begin{equation*}
	i_{n}: O(\R^{n})\hookrightarrow \mc{L}(\ell^{2}), \quad i_{n}(T):=T\oplus I_{\infty-n},
\end{equation*}
where $I_{\infty-n}:W_{\infty-n}\to W_{\infty-n}$ is the identity operator.
 Due to the rotationally invariant property of the Gaussian measure, we can consider the unitary representations,
\begin{equation*}
	\rho_{n}:O(\R^{n})\longrightarrow L^{2}(H^{a}_{n},\mu_{n}), \quad \rho_{n}(T)[f]:=f\circ T.
\end{equation*}
It is classical the fact that this representations are strongly continuous (see e.g. Folland \cite[Ch. 3]{Ff}). Now, in order to apply Theorem \ref{TPR}, we have to show that the diagram of Figure \ref{F10} commutes where $i_{nm}:O(\R^{n})\to O(\R^{m})$ are defined by $i_{nm}(T):=T\oplus I_{n-m}$. Here, $I_{n-m}:\R^{n-m}\to\R^{n-m}$ denotes the identity operator.
\begin{figure}[h!]
		\begin{tikzcd}[row sep=large, column sep=large]
		O(\R^{n}) \arrow{r}{\rho_{n}} \arrow{d}{i_{nm}} & \mc{U}(L^{2}(\mu_{n})) \arrow{d}{\Pi_{nm}} \\
		O(\R_{m}) \arrow{r}{\rho_{m}} &  \mc{U}(L^{2}(\mu_{m}))
	\end{tikzcd}
	\caption{Diagram VII}
	\label{F10}
\end{figure}

\noindent Let $f\in L^{2}(\mu_{m})$, $n\leq m$ and $P_{nm}:L^{2}(\mu_{m})\to \pi^{\ast}_{nm}(L^{2}(\mu_{n}))\simeq L^{2}(\mu_{n})$ the orthogonal projection onto $L^{2}(\mu_{n})$. On the one hand, we have
\begin{align*}
	(\Pi_{nm}\circ \rho_{n})(T)[f] & =(\widehat{\rho_{n}(T)}_{nm}\oplus I_{nm}^{\perp}) \, [P_{nm}(f)+(I_{m}-P_{nm})(f)] \\
	& =P_{nm}(f)\circ T\oplus (I_{m}- P_{nm})(f).
\end{align*}
On the other hand,
\begin{align*}
	(\rho_{m}\circ i_{nm})(T)[f] & =\rho_{m}(T\oplus I_{m-n})[P_{nm}(f)+(I_{m}-P_{nm})(f)] \\
	& = (P_{nm}(f)+(I_{m}-P_{nm})(f))\circ (T\oplus I_{m-n}) \\
	& =P_{nm}(f)\circ T\oplus (I_{m}- P_{nm})(f).
\end{align*}
Consequently, the diagram of Figure \ref{F10} commutes. Applying Theorem \ref{TPR}, we obtain a strongly continuous unitary representation $\rho: O(\infty)\to \mc{U}(L^{2}(\mu_{\infty}))$ where 
$O(\infty):=\bigcup_{n\in\N}O(\R^{n})\subset \mc{L}(\ell^{2})$ is endowed with the subspace topology induced from $\mc{L}(\ell^{2})$.

\appendix
\section{An abstract result about colimits}\label{Se5}

In this final section we give an abstract result concerning colimits in the category $\mf{Ban}$. We will stablish an abstract framework in which the identification provided by Theorem \ref{T3.2} holds. However it must be noted that in this general setting we do not have the conditional expectation as a tool to prove the surjectivity of the operator $\mf{I}_{p}$. This problem will be overcome by imposing certain additional conditions on the directed set $I$.
\par Let $(X_{i},\varphi_{ij})$ be a codiagram in $\mf{Ban}$ indexed by the directed set $I$. Let us denote by $\mf{F}:I\to\mf{Ban}$ the covariant functor representing $(X_{i},\varphi_{ij})$. The following definitions are analogous to the ones introduced in Section \ref{Se3}. A sequence $(x_{i})_{i\in I}\in\prod_{i\in I}X_{i}$ is said to be co-Cauchy if for every $\varepsilon>0$, there exists $\ell\in I$ such that 
\begin{equation}
	\label{coC2}
	\|\varphi_{ij}(x_{i})-x_{j}\|_{X_{j}}<\varepsilon, \quad \forall (i,j)\in I\times I,  \quad \ell \leq i\leq j.
\end{equation}
We consider the space
\begin{equation*}
	\mathscr{L}(\mf{F}):=\Big\{(x_{i})_{i\in I}\in \prod_{i\in I}X_{i} : (x_{i})_{i\in I} \text{ is co-Cauchy}\Big\}\Big\slash \sim,
\end{equation*}
where two sequences $(x_{i})_{i\in I}, (y_{i})_{i\in I}\in \prod_{i\in I}X_{i}$ are related, $(x_{i})_{i\in I}\sim(y_{i})_{i\in I}$, if by definition
\begin{equation*}
	\lim_{i\in I} \, \|x_{i}-y_{i}\|_{X_{i}}=0,
\end{equation*}
where the limit is understood in the sense of net convergence. By the elementary properties of the limit, it becomes apparent that $\sim$ is an equivalence relation. We define a norm on $\mathscr{L}(\mf{F})$ by
\begin{equation*}
	\|(x_{i})_{i\in I}\|_{\mathscr{L}}:=\lim_{i\in I} \, \|x_{i}\|_{X_{i}}, \quad (x_{i})_{i\in I}\in \mathscr{L}(\mf{F}).
\end{equation*}
The convergence of the net $(\|x_{i}\|_{X_{i}})_{i\in I}$ follows from the co-Cauchy property by a simple computation analogous to the one given in \eqref{Eqp}.
\par We introduce a condition over the directed set $I$ in order to prove the surjectivity of the operator $\mf{I}_{p}$ in this general setting without invoking to the conditional expectation. We say that a directed set $(I,\leq)$ has the $\mathscr{E}$-property if the following conditions are satisfied:
\begin{enumerate}
	\item For every $i\in I$, there exists $j\in I$ such that $i<j$.
	\item For every increasing chain $(i_{n})_{n\in\mathbb{N}}\subset I$,
	\begin{equation*}
		i_{1}< i_{2}<\cdots < i_{n}< \cdots,
	\end{equation*}
	there does not exist $i\in I$ such that $i_{n}\leq i$ for each $n\in\mathbb{N}$.
\end{enumerate}
Here, we are denoting by $<$ the strict preorder induced by $\leq$. The letter $\mathscr{E}$ in the name of this property follows from the word \textit{equilibrium} since we need a directed set that is sufficiently large (1) but also sufficiently small (2).
\par Under these definitions we can state the main result of this section.

\begin{theorem}
	\label{T3.22}
	Let $(X_{i},\varphi_{ij})$ be a codiagram in $\mathfrak{Ban}$ with colimit $(X,\phi_{i})$, where $I$ is a directed set satisfying the $\mathscr{E}$-property. Then, $X$ is isometrically isomorphic to $\mathscr{L}(\mf{F})$. Symbolically, 
	\begin{equation*}
		X\simeq\mathscr{L}(\mf{F}).
	\end{equation*}
\end{theorem}
For proving this result we need the following lemma.
\begin{lemma}
	\label{L3.1}
	Let $(I,\leq)$ be a directed set with the $\mathscr{E}$-property and $(i_{n})_{n\in\mathbb{N}}\subset I$. Then, there exists an increasing chain $(j_{n})_{n\in\mathbb{N}}\subset I$, 
	\begin{equation*}
		j_{1}< j_{2}< \cdots< j_{n}<\cdots 
	\end{equation*}
	such that $i_{n}\leq j_{n}$ for each $n\in \mathbb{N}$.
\end{lemma}

\begin{proof}
	Since $I$ is a directed set, there exists $j\in I$ such that $i_{1},i_{2}\leq j$. Choose $j_{1}:=j$. Now, for $n>1$, we proceed by induction. Suppose we have chosen $\{j_{1},j_{2},...,j_{n-1}\}\subset I$ with $j_{1}< j_{2}<\cdots< j_{n-1}$ and $i_{m}\leq j_{m}$ for each $m\in\{1,2,\cdots,n-1\}$. Let us define the subset
	$$J_{n}:=\{j\in I: j_{n-1},i_{n}\leq j\}.$$
	Clearly $J_{n}\neq \emptyset$ since $I$ is directed. Choose $j\in J_{n}$. If $j_{n-1}<j$, choose $j_{n}:=j$. Otherwise, by item (1) of the $\mathscr{E}$-property, there exists $k\in I$ such that $j<k$. By the transitivity property of the preorder, $j_{n-1},i_{n}<k$. Choose $j_{n}:=k$ in this case. Arguing in this way, we obtain the required sequence $(j_{n})_{n\in\mathbb{N}}$.
\end{proof}

\begin{proof}[Proof of Theorem \ref{T3.22}]
	Let us consider the operator
	\begin{equation*}
		\mf{I}:\mathscr{L}(X_{i})\to X, \quad (x_{i})_{i\in I}\mapsto \lim_{i\in I} \phi_{i}(x_{i}).
	\end{equation*}
	The first part of the proof follows the same steps as the proof of Theorem \ref{T3.2}. Let us start by showing that the operator $\mf{I}$ is well defined. Let $(x_{i})_{i\in I}\in\mathscr{L}(X_{i})$. For any given $\varepsilon>0$, taking $\ell\in I$ satisfying \eqref{coC2}, we obtain
	\begin{align*}
		\|\phi_{i}(x_{i})-\phi_{j}(x_{j})\|_{X}&=\|(\phi_{j}\circ\varphi_{ij})(x_{i})-\phi_{j}(x_{j})\|_{X}\\
		&=\|\varphi_{ij}(x_{i})-x_{j}\|_{X_{j}}<\varepsilon, \quad \forall (i,j)\in I\times I,  \ \ell \leq i\leq j.
	\end{align*}
	Hence $(\phi_{i}(x_{i}))_{i\in I}$ is a Cauchy net in $X$, from which follows its convergence. On the other hand, given two related elements $(x_{i})_{i\in I}\sim (y_{i})_{i\in I}$, we have $\mf{I}(x_{i})_{i\in I}=\mf{I}(y_{i})_{i\in I}$ since
	\begin{align*}
		\|\mf{I}(x_{i})_{i\in I}-\mf{I}(y_{i})_{i\in I}\|_{X}&=\lim_{i\in I}\|\phi_{i}(x_{i})-\phi_{i}(y_{i})\|_{X}\\
		&=\lim_{i\in I}\|x_{i}-y_{i}\|_{X_{i}}=0.
	\end{align*}
	This proves that $\mf{I}$ is well defined. By an analogous computation we obtain that $\mf{I}$ is an isometry:
	\begin{equation*}
		\|\mf{I}(x_{i})_{i\in I}\|_{X}=\lim_{i\in I}\|\phi_{i}(x_{i})\|_{X}=\lim_{i\in I}\|x_{i}\|_{X_{i}}=\|(x_{i})_{i\in I}\|_{\mathscr{L}}.
	\end{equation*}
	Finally, let us prove that $\mf{I}$ is onto by using the $\mathscr{E}$-property of $I$. Let $x\in X$. Since $\bigcup_{i\in I}\phi_{i}(X_{i})$ is dense in $X$, there exists a sequence $$(x_{n})_{n\in\mathbb{N}}\subset \bigcup_{i\in I}\phi_{i}(X_{i}),$$
	such that $\|x_{n}-x\|_{X}\to 0$ as $n\to\infty$. Consequently, there exists a sequence $(i_{n})_{n\in\mathbb{N}}\subset I$ such that $x_{n}\in \phi_{i_{n}}(X_{i_{n}})$ for each $n\in\mathbb{N}$. Let us denote by $\bar{x}_{n}$ the element in $X_{i_{n}}$ satisfying $x_{n}=\phi_{i_{n}}(\bar{x}_{n})$. By invoking Lemma \ref{L3.1} we can consider a sequence $(j_{n})_{n\in\mathbb{N}}$ in $I$ satisfying
	\begin{equation*}
		j_{1}<j_{2}<\cdots< j_{n}< \cdots, \quad i_{n}\leq j_{n}, \ \ n\in \mathbb{N}.
	\end{equation*}
	Define the new sequence
	\begin{equation*}
		(y_{n})_{n\in\mathbb{N}}\in\prod_{n\in\N}X_{j_{n}}, \quad y_{n}:=\varphi_{i_{n}j_{n}}(\bar{x}_{n})\in X_{j_{n}},
	\end{equation*}
	and the associated net $(z_{i})_{i\in I}\in \prod_{i\in I} X_{i}$ given by
	\begin{equation*}
		z_{i}:=\left\{ \begin{array}{ll}
			\varphi_{j_{n}i}(y_{n}) & \text{if } j_{n}\leq i \text{ and } j_{n+1}\nleq i \\
			0 & \text{else}.
		\end{array}
		\right.
	\end{equation*}
	Let us see that $\mf{I}(z_{i})_{i\in I}=x$. Let $\varepsilon>0$. Then, there exists $N\in\mathbb{N}$ such that 
	\begin{equation*}
		\|\phi_{j_{n}}(y_{n})-x\|_{X}=\|\phi_{i_{n}}(\bar{x}_{n})-x \|_{X}=\|x_{n}-x\|_{X}<\varepsilon, \quad \forall n\geq N.
	\end{equation*}
	Let $i\in I$ with $j_{N}\leq i$. Then, by item (2) of the $\mathscr{E}$-property, there exists $m>N$ such that $j_{m}\nleq i$. Take $m$ to be the minimum satisfying this property. Then $j_{m-1}\leq i$ and $j_{m}\nleq i$. Hence
	\begin{equation*}
		\|\phi_{i}(z_{i})-x\|_{X}=\|(\phi_{i}\circ\varphi_{j_{m-1}i})(y_{m-1})-x\|_{X}=\|\phi_{j_{m-1}}(y_{m-1})-x\|_{X}<\varepsilon.
	\end{equation*}
	This shows that $\mf{I}(z_{i})_{i\in I}=x$ and concludes the proof.
\end{proof}
To finalize this section we prove an analogue of Theorem \ref{T3.3} in this general setting. Let us define the isometries $(\psi_{i})_{i\in I}$, $\psi_{i}:X_{i}\to \mathscr{L}(\mf{F})$, by
\begin{equation*}
	\psi_{i}(x_{i})=(y_{j})_{j\in I}, \quad y_{j}:=\left\{\begin{array}{ll}
		\varphi_{ij}(x_{i}) & \text{ if } i\leq j \\
		0 & \text{ else }
	\end{array}\right.
\end{equation*}
An easy computation shows that the pair $(\mathscr{L}(\mf{F}),\psi_{i})$ defines a cocone in $\mathfrak{Ban}$.
\begin{theorem}
	\label{T3.32}
	Let $\mf{F}\equiv(X_{i},\varphi_{ij})$ be a codiagram in $\mathfrak{M}$ with colimit $(X,\phi_{i})$, where $I$ is a directed set satisfying the $\mathscr{E}$-property. Then the cocone $(\mathscr{L}(\mf{F}),\psi_{i})$ defines a realization of the colimit of $(X_{i},\varphi_{ij})$. 
\end{theorem}
\begin{proof}
	Since the colimit is unique up to a unique isomorphism on the category of cocones, it is enough to prove that the cocone $(\mathscr{L}(\mf{F}),\psi_{i})$ is isomorphic to $(X,\phi_{i})$ in the category of cocones $\mf{K}^{\ast}(\mf{F})$. By the proof of Theorem \ref{T3.22}, the map 
	\begin{equation*}
		\mathfrak{I}:\mathscr{L}(\mf{F})\longrightarrow X, \quad (x_{i})_{i\in I}\mapsto \lim_{i\in I}\phi_{i}(x_{i}),
	\end{equation*}
	defines an isometric isomorphism. To prove that it defines also an isomorphism of cocones, we need to verify the commutativity of the diagram of Figure \ref{F92} for each $i,j\in I$.
	
	\begin{figure}[h!]
		\[
		\xymatrix@C+1em@R+1em{ 
			X_{i} \ar^{\varphi_{ij}}[rr] \ar^{\psi_{i}}[dr] \ar@/_1em/_{\phi_{i}}[ddr] & & X_{j} \ar_{\psi_{j}}[dl] \ar@/^1em/^{\phi_{j}}[ddl] \\
			&\mathscr{L}(\mf{F}) \ar^{\mathfrak{I}}[d] & \\
			&  X&
		}
		\]
		\caption{Diagram VIII}
		\label{F92}
	\end{figure}
	
	\noindent Take $x_{i}\in X_{i}$, then by a simple computation we obtain
	\begin{equation*}
		(\mathfrak{I}\circ \psi_{i})(x_{i})=\lim_{r}h_{r}, \quad h_{r}:=\left\{
		\begin{array}{ll}
			\phi_{i}(x_{i}) & \text{ if } i\leq r \\
			0 & \text{ else, } 
		\end{array}
		\right.
	\end{equation*}
	and thus $(\mathfrak{I}\circ \psi_{i})(x_{i})=\phi_{i}(x_{i})$. Hence the diagram of Figure 2 commutes and the proof is concluded.
\end{proof}

\end{document}